\newtheorem{thm}{Theorem}[section]
\newtheorem{lem}[thm]{Lemma}
\newtheorem{cor}[thm]{Corollary}
\newtheorem{prop}[thm]{Proposition}  
\theoremstyle{remark}
\newtheorem{ack}{Acknowledgments}        
\newtheorem{nota}{Notation}    
\theoremstyle{definition}
\newtheorem{defn}[thm]{Definition}
\newtheorem{rem}[thm]{Remark} 
\newtheorem{eg}[thm]{Examples}
\numberwithin{equation}{section}
\title{Id\`elic class field theory for 3-manifolds}
\date{}
\author{Hirofumi Niibo}
\begin{document}
\bibliographystyle{amsalpha+}
\maketitle

%%%%%%%%%%%%%%%%%%%%%%%%%%%%%%%%%%%%%%%%%%%%%%%%%%%%%%%%%%%%%%%%%%%%%%%%%%%%%%%%%%%%%%%%%%

\begin{abstract}

Following the analogies between 3-dimensional topology and number theory, we study an id\`elic form of class field theory for 3-manifolds. 
For a certain set $\mathcal{K}$ of knots in a 3-manifold $M$, 
we first present a local theory for each knot in $\mathcal{K}$, which is analogous to local class field theory,  and then, 
getteing together over all knots in $\mathcal{K}$, we give an analogue of id\`elic  global class field theory for an integral homology sphere $M$.
\end{abstract}

% 目次
%\tableofcontents

\footnote[0]{2010 \emph{Mathematics Subject Classification:} Primary 57M12, Secondary 11R37, 11S31.}
\footnote[0]{\emph{Keywords and phrases:} Id\`ele, Class field theory, 3-manifold, Arithmetic Topology.}

%%%%%%%%%%%%%%%%%%%%%%%%%%%%%%%%%%%%%%%%%%%%%%%%%%%%%%%%%%%%%%%%%%%%%%%%%%%%%%%%%%%%%%%%%%  % 1章　イントロ
\section{Introduction}

\noindent

\indent The analogies between knots and primes were firstly pointed out by B. Mazur (\cite{mazur}) in 1960's and, after a long silence, 
M. Kapranov and A. Reznikov took up the analogies between 3-manifolds and number rings again (\cite{kapranov}, \cite{reznikov1}, \cite{reznikov2}), and M. Morishita, whose work started independently, 
investigated the subject systematically (\cite{mor2}, \cite{mor3}, \cite{mor1}). 
This new area of mathmatics is now called {\it arithmetic topology}. \\

\par Here is a part of basic analogies: For a number field $k$, ${\mathcal O}_{k}$ stands for the ring of integers of $k$.

$$ \begin{array}{ccc}
 \text{ 3-manifold }M &  \longleftrightarrow & \text{number ring Spec}({\mathcal O}_{k}) \\
 \text{ knot }K \text{ in }M & \longleftrightarrow & \text{ prime }\mathfrak{p} \text{ in Spec}({\mathcal O}_{k}) \\
 \text{ (ramified) covering }N\rightarrow M  &  \longleftrightarrow & \text{ (ramified) extension } K/k\\
 \text{ 1st homology group } H_1(M;\mathbb{Z})  & \longleftrightarrow  &  \text{ ideal class group } H_k
\end{array}\leqno{(1.1)}
$$
%\begin{center}
%\begin{tabular}{|c|c|}  \hline 
%number ring Spec$({\mathcal O}_{k})$　& 3-manifold $M$\\ \hline
%prime $\mathfrak{p}$ in Spec$({\mathcal O}_{k})$ & knot $K$ in $M$ \\ \hline
%(ramified) extension  & (ramified) covering  \\ 
%$K/k$ & $h: N\rightarrow M$  \\ \hline
%ideal class group  & 1st homology group  \\ 
%$H_k$ & $H_1(M; \mathbb{Z})$  \\ \hline
%\end{tabular}
%\end{center}
In particular, we have the following analogy between the Hurewicz isomorphism and unramified class field theory (E. Artin's isomorphism): 

$$
\begin{array}{ccc}
 H_1(M;\mathbb{Z}) \cong \mathrm{Gal}(M^{\mathrm{ab}}/M) & \longleftrightarrow &  H_k \cong \mathrm{Gal}(\tilde{k}^{\mathrm{ab}}/k)
\end{array}\leqno{(1.2)}
$$
%\begin{center}
%\begin{tabular}{|c|c|}  \hline 
%$H_k \cong \mathrm{Gal}(\tilde{k}^{\mathrm{ab}}/k)$　& $H_1(M;\mathbb{Z}) \cong \mathrm{Gal}(M^{\mathrm{ab}}/M)$\\ \hline
%\end{tabular}
%\end{center}
%\begin{flushright}(1.1)\end{flushright}
Here $M^{\mathrm{ab}}$(resp. $\tilde{k}^{\mathrm{ab}}$) denotes the maximal Abelian covering of $M$ 
(resp. the maximal unramified Abelian extension of $k$). \\

\par The purpose of this paper is, following the spirit of arithmetic topology, 
to pursue an id\`ele theoretic form of class field theory for 3-manifolds 
and so extend the analogy (1.2) for ramified coverings/extensions. \\

\par For this, we first develop a local theory for each knot in a 3-manifold,
which is analogous to local class field theory, based on the following analogies. \\

$$
\begin{array}{ccc}
 \text{tubular neighborhood of }K & &  \mathfrak{p}\text{-adic integers}\\
 V_K & \longleftrightarrow &  \mathrm{Spec}(\mathcal{O}_{\mathfrak{p}})\\
 \text{boundary of }V_K & & \mathfrak{p}\text{-adic field} \\
 \partial V_K \cong V_K\setminus K & \longleftrightarrow & \mathrm{Spec}(k_{\mathfrak{p}}) = \rm{Spec}(\mathcal{O}_{\mathfrak{p}}) \setminus \rm{Spec}(\mathcal{O}_{\mathfrak{p}}/\mathfrak{p})
\end{array}\leqno{(1.3)}
$$
%\begin{center}　%1.2
%\begin{tabular}{|c|c|c|}  \hline
%$\mathfrak{p}$-adic integers & tubular neighhood of $K$  \\ 
%Spec$(\mathcal{O}_{\mathfrak{p}})$ & $V_K$  \\ \hline
% $\mathfrak{p}$-adic field & boundary of $V_K$ \\ 
%$\mathrm{Spec}(k_{\mathfrak{p}}) = \rm{Spec}(\mathcal{O}_{\mathfrak{p}}) \setminus \rm{Spec}(\mathcal{O}_{\mathfrak{p}}/\mathfrak{p})$ & $\partial V_K \cong V_K\setminus K$  \\ \hline
%\end{tabular}
%\end{center}
%\begin{flushright}(1.2)\end{flushright}

Then a topological analogue of the local reciprocity homomorphism is simply given by the Hurewicz homomorphism:
$$\rho_K:H_1(\partial V_K ; \mathbb{Z}) \rightarrow \mathrm{Gal}(\partial V_K^{\mathrm{ab}}/\partial V_K).$$

\par For a certain given set $\mathcal{K}$ of knots in a 3-manifold $M$ (cf. Section 5), 
we introduce the {\it id\`ele group} $I_{(M;\mathcal{K})}$ as a restricted product of $H_1(\partial V_K ;\mathbb{Z})$ over all $K$ in $\mathcal{K}$, 
and getting $\rho_K$'s together over all $K$ in $\mathcal{K}$, define the homomorphism

$$
\varphi_{(M;\mathcal{K})}:I_{(M;\mathcal{K})} \rightarrow \mathrm{Gal}(M;\mathcal{K})^{\mathrm{ab}}:= \varprojlim_{L} \mathrm{Gal}(X_{L}^{\mathrm{ab}} / X_L)
$$
where $L$ runs over all finite subsets of $\mathcal{K},\; X_L := M \setminus L$ and $X_{L}^{\mathrm{ab}}$ is the maximal Abelian covering of $X_L$. 
The homomorphism $\phi_{(M;\mathcal{K})}$ factors through the {\it id\`ele class group} 
$C_{(M;\mathcal{K})}:= I_{(M;\mathcal{K})}/P_{(M;\mathcal{K})}$ with the {\it principal id\`ele group} $P_{(M;\mathcal{K})}$, 
and hence we obtain an analogue of the global reciprocity homomorphism
$$ \rho_{(M;\mathcal{K})}: C_{(M;\mathcal{K})} \rightarrow \mathrm{Gal}(M;\mathcal{K})^{\mathrm{ab}}.$$
%書き換えた
%We then show that $\rho_{(M;\mathcal{K})}$ induces, for a finite Abelian covering $h: N \rightarrow M$ ramified over finite subset of $\mathcal{K}$, 
%an isomorphism
%$$
%\rho_{N/M} :\; C(M;\mathcal{K})\,/\,h_*(C(N,h^{-1}(\mathcal{K}))) \cong \mathrm{Gal}(N/M)
%$$
%which may be regarded as an analogue of the fundamental isomorphism theorem in global class field theory for number fields (\cite{kato}).　\\
Then our main result (Theorem 5.9 below) is stated as follows. 
Suppose that $M$ is an integral homology sphere. 
For a finite Abelian covering $h: N \rightarrow M$ branched over a finite subset of $\mathcal{K}$, 
the global reciprocity homomorphism $\rho_{(M;\mathcal{K})}$ induces an isomorphism
$$
\rho_{N/M}:C_{(M;\mathcal{K})}/h_*(C_{(N,h^{-1}(\mathcal{K}))}) \cong \mathrm{Gal}(N/M).
$$
This result may be regarded as an analogue of the fundamental theorem in global class field theory for number fields (\cite{kato}). \\

We note that id\`{e}lic class field theory for 3-manifolds was firstly studied by A. Sikora (\cite{sikora1}, \cite{sikora2}). 
Our approach is different from his and elementary. \\

Here is the contents of this paper. 
In Section 2 we review the class field theory for algebraic fields. 
In Section 3 we give a description of Hilbert theory for 3-manifolds. 
In Section 4 we give the local class field theory for tori, 
and section 5 we present the global class field theory over on integral homology 3-sphere.

%%%%%%%%%%%%%%%%%%%%%%%%%%%%%%%%%%%%%%%%%%%%%%%%%%%%%%%%%%%%%%%%%%%%%%%%%%%%%%%%%%%%%%%%%%　 Notation

\begin{nota}
For a connected topological space $X$ (resp. a field $k$),
we denote by $X^{\rm{ab}}$ (resp. $k^{\rm{ab}}$) the maximal Abelian covering of $X$ (resp. the maximal Abelian extension of $k$). 
We denote by $\mathrm{Gal}(Y/X)$ (resp. $\mathrm{Gal}(F/k)$) for the Galois group of a Galois covering $h:Y \rightarrow X$ (resp. a Galois extension $F/k$). 
We write $\pi_1(X)$ for the fundamental group of $X$ omitting a base point and write $H_n(X)$ simply for the homology group with coefficients in $\mathbb{Z}$.

\end{nota}

%%%%%%%%%%%%%%%%%%%%%%%%%%%%%%%%%%%%%%%%%%%%%%%%%%%%%%%%%%%%%%%%%%%%%%%%%%%%%%%%%%%%%%%%%%　 ack

\begin{ack}
I would like to thank my supervisor Professor Masanori Morishita for his advice and encouragements. 
I would also like to express my deep gratitude to my family for their support.
\end{ack}

%%%%%%%%%%%%%%%%%%%%%%%%%%%%%%%%%%%%%%%%%%%%%%%%%%%%%%%%%%%%%%%%%%%%%%%%%%%%%%%%%%%%%%%%%% \`  2章　類体論

\section{Review of class field theory in number theory}

\noindent
\indent
In this section, we review local and global class field theory in number theory whose topological analogies will be studied in the sections 4 and 5. 
We consult \cite{kato} and \cite{Neukirch} as basic references for this section. \\

 Let $k$ be a number field of finite degree over the rational number field $\mathbb{Q}$.
We denote by $\mathcal{O}_k$ the ring of integers of $k$. 
A prime $\mathfrak{p}$ of $k$ is a class of equivalent valuations of $k$. 
The finite primes belong to the maximal ideals of $\mathcal{O}_k$. 
The infinite primes fall into two classes, the real and complex ones, where the real primes correspond to the embeddings $k \hookrightarrow \mathbb{R}$ 
and the complex primes correspond to the pairs of conjugate non-real embeddings $k \hookrightarrow \mathbb{C}$. 
For a finite prime $\mathfrak{p}$, let  $v_\mathfrak{p}$ be the corresponding valuation of $k$,
and set $\vert a \vert_\mathfrak{p} = (N\mathfrak{p})^{-v_\mathfrak{p}(a)}$ for $a \in k$ where $N\mathfrak{p}=\#(\mathcal{O}_k/\mathfrak{p})$.
For a real prime $\mathfrak{p}$ with corresponding embedding $\iota : k \hookrightarrow \mathbb{R}$, 
set $\vert a \vert_\mathfrak{p} = \vert \iota(a) \vert$ for $a \in k$, 
and for a complex prime $\mathfrak{p}$ with corresponding embedding $\iota:k \hookrightarrow \mathbb{C}$, 
set $\vert a \vert_\mathfrak{p} =\vert \iota(a) \vert^2$ for $a \in k$.\\

\par Let $k_{\frak{p}}$ be the local field obtained as the completion of a number field $k$ with respect to the metric $\vert \cdot \vert_{\mathfrak{p}}$. 
Suppose that  $\mathfrak{p}$ is a finite prime of $k$. 
Then $k_\mathfrak{p}$ is non-archimedian local field, a finite extension of the $p$-adic field $\mathbb{Q}_p$ for a prime number $p$. 
Let $v_{\frak{p}} : k_{\frak{p}}^\times \rightarrow \mathbb{Z}$ be the discrete valuation normalized by $v_{\frak{p}}(k_{\frak{p}}^\times)=\mathbb{Z}$. 
We let $\mathcal{O}_{\frak{p}} :=\{\, a \in k_{\frak{p}} \mid v_{\mathfrak{p}}(a) \geq 0 \,\}$ be the valuation ring
and $\mathfrak{p} = \{\, a \in k \, | \,  v_{\mathfrak{p}}(a) > 0 \,\}$ be the unique maximal ideal of $\mathcal{O}_{\frak{p}}$, 
and let $\mathbb{F}_{\frak{p}}$ be the residue field $\mathcal{O}_{\frak{p}}/\mathfrak{p}$, a finite extension of $\mathbb{F}_p = \mathbb{Z}/p\mathbb{Z}$. 
We denote by $U_{\frak{p}}$ the unit group ${\mathcal O}_{\frak{p}}^{\times}$. 
We note that $U_{\frak{p}} = \mathrm{Ker}(v_{\mathfrak{p}})$ and so we have the following split exact sequence 
$$ 0 \longrightarrow U_{\mathfrak{p}} \longrightarrow k_{\mathfrak{p}}^\times \xlongrightarrow{v_\mathfrak{p}} \mathbb{Z} \longrightarrow 0 . \leqno{(2.1)}$$
When $\mathfrak{p}$ is an infinite prime, we let $\mathcal{O}_\mathfrak{p}=k_\mathfrak{p}$ and $U_\mathfrak{p}=k_{\mathfrak{p}}^\times$ by convention.

\par Let $k_{\frak{p}}^{\mathrm{ab}}$ be the maximal Abelian extension of $k_{\mathfrak{p}}$. 
When $k_{\frak{p}}$ is non-archimedian, we denote by $k_{\frak{p}}^{\mathrm{ur}}$ the maximal unramified extension of $k_{\frak{p}}$. 
Note that the Galois group $\mathrm{Gal}(k_{\mathfrak{p}}^{\mathrm{ur}}/k_{\mathfrak{p}})$ is identified with $\mathrm{Gal}(\bar{\mathbb{F}}_{\mathfrak{p}}/\mathbb{F}_{\frak{p}}) \cong \hat{\mathbb{Z}}$, 
where $\hat{\mathbb{Z}}$ denotes the profinite completion of $\mathbb{Z}$. 
A main part of local class field theory for the local field $k_{\mathfrak{p}}$ is stated as follows.

	\begin{thm}[Local class field theory]\label{local CFT} 
There is a canonical homomorphism, called the local reciprocity homomorphism, 
$$\rho_{k_\mathfrak{p}}:k_\mathfrak{p}^{\times} \rightarrow \mathrm{Gal}(k_\mathfrak{p}^{\mathrm{ab}}/k_\mathfrak{p})$$ 
which satisfies the following properties:

		$(1)$ For any finite Abelian extension $F/k_\mathfrak{p}$, $\rho_{k_\mathfrak{p}}$ induces the isomorphism 
		 $$
                 	\rho_{F/k_\mathfrak{p}}:k_\mathfrak{p}^{\times}/N_{F/k_\mathfrak{p}}(F^{\times})\cong \mathrm{Gal}(F/k_\mathfrak{p}) 
                 $$
                 where $N_{F/k_\mathfrak{p}}$ denotes the norm map for $F/k_\mathfrak{p}$.

		$(2)$ When $k_\mathfrak{\mathfrak{p}}$ is non-archimedean, we have the following commutative diagrams with exact horizontal sequences: 
                \[\xymatrix{
			0 \ar[r] & U_{\mathfrak{p}} \ar[r] \ar[d]^{ \rho_{ k_\mathfrak{p} } \vert_{U_\mathfrak{p} }} & k_\mathfrak{p}^\times \ar[d]^{\rho_{k_\mathfrak{p}}} \ar[r]^{v_{\mathfrak{p}}} \ar@{}[dr] & \mathbb{Z} \ar[d]^{} \ar[r] & 0 \\
			0 \ar[r] & \mathrm{Gal}(k_{\mathfrak{p}}^{\mathrm{ab}}/k_{\mathfrak{p}}^{\mathrm{ur}}) \ar[r] & \mathrm{Gal}(k_{\mathfrak{p}}^{\mathrm{ab}}/ k_\mathfrak{p}) \ar[r] & \mathrm{Gal}(\bar{\mathbb{F}}_{\mathfrak{p}}/\mathbb{F}_{\frak{p}}) \ar[r] & 0. \\
			}\]
	
	\end{thm}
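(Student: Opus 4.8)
The plan is to follow the cohomological route to local class field theory. Fix a finite Galois extension $F/k_\mathfrak{p}$ with group $G = \mathrm{Gal}(F/k_\mathfrak{p})$ and degree $n = [F:k_\mathfrak{p}]$; the homomorphism $\rho_{k_\mathfrak{p}}$ of the theorem is then obtained by passing to the inverse limit over all finite Abelian $F$ (for such $F$ one has $G^{\mathrm{ab}} = G$, matching the target in $(1)$).

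First I would record the two cohomological inputs that drive the argument. Hilbert's Theorem 90 gives $H^1(G, F^\times) = 0$. The crucial computation is that of $H^2(G, F^\times)$. I would treat the unramified case first, where $G$ is cyclic generated by Frobenius and $\widehat{H}^q(G, \mathcal{O}_F^\times) = 0$ for all $q$; feeding the analogue of the exact sequence $(2.1)$ for $F$ into the long exact cohomology sequence reduces the computation to $H^2(G, \mathbb{Z}) \cong H^1(G, \mathbb{Q}/\mathbb{Z}) \cong \mathbb{Z}/n\mathbb{Z}$. The general case is deduced by comparison with the unramified extension of equal degree.

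Next I would construct the invariant isomorphism $\mathrm{inv}_F : H^2(G, F^\times) \xrightarrow{\sim} \tfrac{1}{n}\mathbb{Z}/\mathbb{Z}$, check its compatibility under inflation and restriction, and assemble the classical isomorphism $\mathrm{inv} : H^2(\mathrm{Gal}(\bar{k}_\mathfrak{p}/k_\mathfrak{p}), \bar{k}_\mathfrak{p}^\times) \xrightarrow{\sim} \mathbb{Q}/\mathbb{Z}$ in the limit. The fundamental class $u_{F/k_\mathfrak{p}} \in H^2(G, F^\times)$ is the preimage of $\tfrac{1}{n}$. Since $H^1(G, F^\times) = 0$ and $H^2(G, F^\times)$ is cyclic of order $n$ generated by $u_{F/k_\mathfrak{p}}$, Tate's theorem applies: cup product with $u_{F/k_\mathfrak{p}}$ yields isomorphisms $\widehat{H}^q(G, \mathbb{Z}) \xrightarrow{\sim} \widehat{H}^{q+2}(G, F^\times)$ for every $q$. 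Taking $q = -2$ gives $G^{\mathrm{ab}} = \widehat{H}^{-2}(G, \mathbb{Z}) \xrightarrow{\sim} \widehat{H}^0(G, F^\times) = k_\mathfrak{p}^\times / N_{F/k_\mathfrak{p}}(F^\times)$, and $\rho_{F/k_\mathfrak{p}}$ is its inverse; this is property $(1)$. Norm-compatibility of the fundamental classes makes these isomorphisms cohere as $F$ varies, yielding $\rho_{k_\mathfrak{p}}$.

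For property $(2)$ I would check the two squares in the non-archimedean case. The right square is the statement that a uniformizer is sent by $\rho_{k_\mathfrak{p}}$ to a lift of Frobenius, so that its composite into $\mathrm{Gal}(\bar{\mathbb{F}}_\mathfrak{p}/\mathbb{F}_\mathfrak{p}) \cong \hat{\mathbb{Z}}$ agrees with $v_\mathfrak{p}$; this is the explicit unramified reciprocity law. The left square then follows since $\rho_{k_\mathfrak{p}}(U_\mathfrak{p})$ lands in the inertia group $\mathrm{Gal}(k_\mathfrak{p}^{\mathrm{ab}}/k_\mathfrak{p}^{\mathrm{ur}})$, with surjectivity onto it coming from the filtration by higher unit groups. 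I expect the main obstacle to be the $H^2$ computation together with the construction and functoriality of the invariant map --- identifying $\mathrm{Br}(k_\mathfrak{p}) \cong \mathbb{Q}/\mathbb{Z}$ canonically --- since the fundamental class, Tate's theorem, and all downstream compatibilities depend on having $\mathrm{inv}$ available.
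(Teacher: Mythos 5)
This is a background statement: the paper presents Theorem~\ref{local CFT} in its review section without any proof, deferring entirely to the cited references \cite{kato} and \cite{Neukirch}, so there is no in-paper argument to compare against. Your outline is the standard cohomological route (Hilbert 90, vanishing of $\widehat{H}^q(G,\mathcal{O}_F^\times)$ in the unramified case, the invariant map and fundamental class, Tate--Nakayama in degree $-2$, and the unramified reciprocity law for the two squares in $(2)$), and it is correct as a plan; it is essentially the treatment one finds in the cited sources, up to packaging (Neukirch's book reorganizes the same inputs into his abstract class field theory with the explicit Frobenius-lift map rather than the cup-product formulation). The only caveat is that your write-up is a roadmap rather than a proof --- the genuinely hard steps you yourself flag, namely the computation of $H^2(G,F^\times)$ in the ramified case and the inflation/restriction compatibility of $\mathrm{inv}$, are exactly where the real work lives --- but for a classical theorem quoted as background this level of detail is appropriate.
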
  

 \begin{cor} \label{cor of local CFT}
 There is the one to one correspondence between finite unramified extensions of $k_{\frak{p}}$ and 
 open subgroups of finite index of $k_{\frak{p}}^\times$ containing $U_{\mathfrak{p}}$.
 \end{cor}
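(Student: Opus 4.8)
The plan is to transport the Galois correspondence for unramified extensions through the reciprocity map $\rho_{k_\mathfrak{p}}$ and match it against the description of finite-index subgroups coming from the split exact sequence $(2.1)$. Throughout I use that, since $\mathrm{Gal}(k_\mathfrak{p}^{\mathrm{ur}}/k_\mathfrak{p}) \cong \hat{\mathbb{Z}}$, there is exactly one finite unramified extension $F_n$ of each degree $n \geq 1$, namely the subfield of $k_\mathfrak{p}^{\mathrm{ur}}$ fixed by $n\hat{\mathbb{Z}}$.

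First I would identify the open finite-index subgroups of $k_\mathfrak{p}^\times$ that contain $U_\mathfrak{p}$. Because $U_\mathfrak{p} = \mathrm{Ker}(v_\mathfrak{p})$ is open and is the kernel in $(2.1)$, any subgroup $N \supseteq U_\mathfrak{p}$ is open and of the form $N = v_\mathfrak{p}^{-1}(S)$ for a subgroup $S \subseteq \mathbb{Z}$; finite index forces $S = n\mathbb{Z}$ for a unique $n \geq 1$. Thus these subgroups are precisely $N_n := v_\mathfrak{p}^{-1}(n\mathbb{Z})$, one for each $n \geq 1$, with $[k_\mathfrak{p}^\times : N_n] = n$.

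Next I would show that the norm group of $F_n$ is exactly $N_n$. By part $(1)$ of Theorem \ref{local CFT}, the map $a \mapsto \rho_{k_\mathfrak{p}}(a)|_{F_n}$ has kernel $N_{F_n/k_\mathfrak{p}}(F_n^\times)$ and induces $k_\mathfrak{p}^\times / N_{F_n/k_\mathfrak{p}}(F_n^\times) \cong \mathrm{Gal}(F_n/k_\mathfrak{p})$, so this norm group has index $n$. On the other hand, chasing the commutative diagram in part $(2)$: an element $\rho_{k_\mathfrak{p}}(a)$ lies in the inertia group $\mathrm{Gal}(k_\mathfrak{p}^{\mathrm{ab}}/k_\mathfrak{p}^{\mathrm{ur}})$ if and only if its image in $\mathrm{Gal}(\bar{\mathbb{F}}_\mathfrak{p}/\mathbb{F}_\mathfrak{p})$ is trivial, if and only if $v_\mathfrak{p}(a) = 0$ (using that $\mathbb{Z} \hookrightarrow \hat{\mathbb{Z}}$ is injective), if and only if $a \in U_\mathfrak{p}$; hence $\rho_{k_\mathfrak{p}}^{-1}(\mathrm{Gal}(k_\mathfrak{p}^{\mathrm{ab}}/k_\mathfrak{p}^{\mathrm{ur}})) = U_\mathfrak{p}$. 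Since $F_n \subseteq k_\mathfrak{p}^{\mathrm{ur}}$ gives $\mathrm{Gal}(k_\mathfrak{p}^{\mathrm{ab}}/F_n) \supseteq \mathrm{Gal}(k_\mathfrak{p}^{\mathrm{ab}}/k_\mathfrak{p}^{\mathrm{ur}})$, and since $N_{F_n/k_\mathfrak{p}}(F_n^\times) = \rho_{k_\mathfrak{p}}^{-1}(\mathrm{Gal}(k_\mathfrak{p}^{\mathrm{ab}}/F_n))$, applying $\rho_{k_\mathfrak{p}}^{-1}$ yields $N_{F_n/k_\mathfrak{p}}(F_n^\times) \supseteq U_\mathfrak{p}$. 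Combining the two observations, $N_{F_n/k_\mathfrak{p}}(F_n^\times)$ is an index-$n$ subgroup containing $U_\mathfrak{p}$, so it must equal $N_n$ by the previous step.

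Finally I would assemble the bijection $F_n \mapsto N_{F_n/k_\mathfrak{p}}(F_n^\times) = N_n$. Surjectivity onto all subgroups containing $U_\mathfrak{p}$ is immediate, since every such subgroup is some $N_n$; injectivity follows because equal norm groups have the same index $n$, hence correspond to unramified extensions of the same degree, which are unique. The main technical point to get right is the diagram chase identifying $\rho_{k_\mathfrak{p}}^{-1}(\mathrm{Gal}(k_\mathfrak{p}^{\mathrm{ab}}/k_\mathfrak{p}^{\mathrm{ur}})) = U_\mathfrak{p}$: one must use that the induced map $\mathbb{Z} \to \mathrm{Gal}(\bar{\mathbb{F}}_\mathfrak{p}/\mathbb{F}_\mathfrak{p}) \cong \hat{\mathbb{Z}}$ is the injective natural map, rather than merely invoking exactness of the bottom row abstractly. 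Everything else is bookkeeping with $(2.1)$ and the uniqueness of unramified extensions, and notably this argument avoids any appeal to the full existence theorem of local class field theory.
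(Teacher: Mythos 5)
The paper states this corollary in its review section without giving a proof, so there is nothing to compare line by line; your argument is correct and is exactly the intended derivation from Theorem \ref{local CFT}. In particular the two key points — that the open finite-index subgroups containing $U_\mathfrak{p}$ are precisely the $v_\mathfrak{p}^{-1}(n\mathbb{Z})$, and that the diagram in part $(2)$ forces $N_{F_n/k_\mathfrak{p}}(F_n^\times)$ to be the unique such subgroup of index $n$ — are handled correctly, including the need for injectivity of $\mathbb{Z}\to\hat{\mathbb{Z}}$ in the diagram chase.
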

$\\$
\par Now, let $k$ be a number field. We define the id\`ele group $I_k$ of $k$ by the following restricted product of $k_{\mathfrak{p}}^\times$'s with respect to $U_{\mathfrak{p}}$'s  over all primes ${\mathfrak{p}}$ of $k$:
$$I_k:= 
\left \{\, (a_\mathfrak{p})_{\mathfrak{p}} \in \displaystyle \prod_{\mathfrak{p}:\text{ prime}} k_\mathfrak{p}^{\times} \mid
 v_{\mathfrak{p}}(a_{\mathfrak{p}})=0 \text{ for almost all finite prime } \mathfrak{p} \,\right\}.$$

Since, for $a \in k^\times$, we have $v_{\mathfrak{p}}(a)=0$ for almost all finite prime $\mathfrak{p}$, 
$k^\times$ is embedded $I_k$ diagonary. 
We let $P_k$ be the image of $k^\times$ in $I_k$ and call it the group of principal id\`eles. 
We then define the id\`ele class group of $k$ by
$$C_k := I_k/k^\times .$$
We recall that the homomorphism
$$ \varphi:I_k \rightarrow \bigoplus_{\mathfrak{p}\text{ prime}} \mathbb{Z} \ ; \  
(a_\mathfrak{p})_\mathfrak{p} \mapsto \prod_{\mathfrak{p}}\mathfrak{p}^{v_{\mathfrak{p}}(a_{\mathfrak{p}})} $$
induces the isomorphism
$$ I_k/U\cdot P_{k} \cong H_k \leqno{(2.3)}$$
where $U=\mathrm{Ker}(\varphi)=\prod_{\mathfrak{p}}U_\mathfrak{p}$, and $H_k$ denotes the ideal class group of $k$.

Let $k^{\mathrm{ab}}$ be the maximal Abelian extension of $k$. Here is a global class field theory for $k$ (cf \cite{Neukirch}).
A main part of global class field theory is summarized as follows.

\begin{thm}[Global class field theory]\label{global CFT}
There is a canonical homomorphism, called the global reciprocity map,
$$\rho_{k}:C_k \rightarrow \mathrm{Gal}(k^{\mathrm{ab}}/k)$$ which has tha following properties: 

$(1)$ For any finite Abelian extension $F/k$,
$\rho_k$ induces the isomorphism.
$$C_k/N_{F/k}(C_F)\cong \mathrm{Gal}(F/k)$$
where $N_{F/k}$ denotes the norm map on the id\`ele groups.

$(2)$ For a prime $\mathfrak{p}$ of $k$, we have the following commutative diagram
\[\xymatrix{
k_\mathfrak{p}^{\times} \ar[d]_{\iota_\mathfrak{p}} \ar[r]^(0.3){\rho_{k_\mathfrak{p}}} \ar@{}[dr]|\circlearrowleft & \mathrm{Gal}(k_\mathfrak{p}^{\mathrm{ab}}/k_{\mathfrak{p}}) \ar[d]^{} \\
C_k \ar[r]_(0.3){\rho_k} & \mathrm{Gal}(k^{\mathrm{ab}}/k) \\
}\]
where $\iota_{\mathfrak{p}}$ is the map induced by the natural inclusion $k_\mathfrak{p}^\times \rightarrow I_k$.       
\end{thm}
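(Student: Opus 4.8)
The plan is to assemble the global reciprocity map from the local ones of Theorem \ref{local CFT} and then compute its kernel and image over each finite Abelian extension; the most conceptual packaging is to exhibit the family $\{C_L\}$ as a \emph{class formation}, but I describe the concrete steps. First I would set
$$\rho_k\bigl((a_\mathfrak{p})_\mathfrak{p}\bigr):=\prod_{\mathfrak{p}} j_\mathfrak{p}\bigl(\rho_{k_\mathfrak{p}}(a_\mathfrak{p})\bigr),$$
where $j_\mathfrak{p}:\mathrm{Gal}(k_\mathfrak{p}^{\mathrm{ab}}/k_\mathfrak{p})\to\mathrm{Gal}(k^{\mathrm{ab}}/k)$ is the map on decomposition groups attached to a fixed prime of $\bar k$ above $\mathfrak{p}$. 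By part $(2)$ of Theorem \ref{local CFT} we have $\rho_{k_\mathfrak{p}}(U_\mathfrak{p})\subseteq\mathrm{Gal}(k_\mathfrak{p}^{\mathrm{ab}}/k_\mathfrak{p}^{\mathrm{ur}})$, so for a finite Abelian $F/k$ the image of the $\mathfrak{p}$-factor in $\mathrm{Gal}(F/k)$ is trivial whenever $a_\mathfrak{p}\in U_\mathfrak{p}$ and $\mathfrak{p}$ is unramified in $F$. Since $a_\mathfrak{p}\in U_\mathfrak{p}$ for almost all $\mathfrak{p}$ and only finitely many primes ramify in $F$, the product has only finitely many nontrivial factors in each $\mathrm{Gal}(F/k)$ and so defines an element of the inverse limit $\mathrm{Gal}(k^{\mathrm{ab}}/k)$. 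Property $(2)$ of the theorem is built into this definition.

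The crux is to show that $\rho_k$ is trivial on the principal id\`eles, i.e. that $\prod_\mathfrak{p} j_\mathfrak{p}(\rho_{k_\mathfrak{p}}(a))=1$ for every $a\in k^\times$; this is Artin's reciprocity law and is where the real difficulty lies. I would first establish it in the cyclotomic case $k=\mathbb{Q}$, $F=\mathbb{Q}(\zeta_n)$, where each local symbol is computed explicitly from the cyclotomic character and the vanishing of the product over $a\in\mathbb{Q}^\times$ reduces to an elementary congruence identity. The general case would then follow by a descent argument: Kronecker--Weber embeds every finite Abelian extension into a cyclotomic field, and the functoriality of the local symbols under the base change $k/\mathbb{Q}$ transports triviality on $\mathbb{Q}^\times$ to triviality on $k^\times$. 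Once this is in hand, $\rho_k$ factors through $C_k=I_k/P_k$ and yields the global reciprocity map.

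It remains to identify $C_k/N_{F/k}(C_F)$ with $\mathrm{Gal}(F/k)$ for finite Abelian $F/k$. The containment $N_{F/k}(C_F)\subseteq\ker(\rho_{F/k})$ is the norm compatibility of the local symbols, and surjectivity follows because the unramified primes already furnish Frobenius elements exhausting $\mathrm{Gal}(F/k)$. The remaining and second principal difficulty is the index equality $[C_k:N_{F/k}(C_F)]=[F:k]$. Reducing to cyclic $F/k$, I would prove the two fundamental inequalities: the lower bound $[C_k:N_{F/k}(C_F)]\ge[F:k]$ from the Herbrand-quotient computation $h(\mathrm{Gal}(F/k),C_F)=[F:k]$, which gives $|\hat H^0(\mathrm{Gal}(F/k),C_F)|=[C_k:N_{F/k}(C_F)]\ge[F:k]$, and the upper bound $[C_k:N_{F/k}(C_F)]\le[F:k]$ either analytically, from the behaviour of Hecke $L$-functions and the density of completely split primes, or cohomologically by proving $H^1(\mathrm{Gal}(F/k),C_F)=0$ via the Hasse principle for the Brauer group. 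Combining the two bounds with the containment and surjectivity above gives the isomorphism $\rho_{F/k}$, which is exactly the assertion, and the local--global compatibility is the one recorded in part $(2)$.
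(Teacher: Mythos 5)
There is no proof in the paper to compare yours against: Section~2 is an expository review, and Theorem~\ref{global CFT} is simply quoted from the standard references \cite{kato} and \cite{Neukirch}; the paper's own contribution is the topological analogue in Sections~4--5, not this number-theoretic statement. That said, your outline is a faithful roadmap of the classical proof as given in those references: the construction of $\rho_k$ as a product of local symbols, with convergence on each finite level $\mathrm{Gal}(F/k)$ guaranteed by Theorem~\ref{local CFT}(2) (units map into inertia, and almost all components are units while only finitely many primes ramify); Artin reciprocity on $P_k$ via the explicit cyclotomic computation over $\mathbb{Q}$ and Kronecker--Weber descent; and the norm-index equality via the two fundamental inequalities for cyclic layers. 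Two caveats. First, this is a roadmap, not a proof: each named input --- the product formula for cyclotomic extensions, the Herbrand-quotient computation $h(\mathrm{Gal}(F/k),C_F)=[F:k]$ (which itself rests on the $S$-unit and $S$-id\`ele analysis), and the second inequality --- is a substantial theorem whose proof you do not supply. Second, a small logical inversion: in the standard cohomological development the vanishing $H^1(\mathrm{Gal}(F/k),C_F)=0$ is \emph{deduced} from the second inequality together with the Herbrand quotient, and the Hasse principle for the Brauer group is then a consequence; invoking the Hasse principle to prove $H^1=0$, as you suggest, reverses the usual order of deduction and would need an independent proof of Albert--Brauer--Hasse--Noether to avoid circularity. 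Neither point is a defect for the purposes of this paper, which takes the theorem as known.
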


By class field theory, we obtain the following proposition. 

\begin{prop}\label{cor of global CFT}
For a finite Abelian extension $F/k$, let $\rho_{F/k}: C_k \rightarrow \mathrm{Gal}(F/k)$ be the homomorphism defined by composing $\rho_k$ with the natural projection 
$\mathrm{Gal}(k^{\mathrm{ab}}/k)\to \mathrm{Gal}(F/k)$. Then we have 

$(1)$ $\mathfrak{p}$ is completely decomposed in $F/k$ if and only if $\rho_{F/k}\circ \iota_\mathfrak{p}(k_{\mathfrak{p}}^\times)= \{1\}$, 

$(2)$ $\mathfrak{p}$ is unbranched in $F/k$ if and only if $\rho_{F/k}\circ \iota_\mathfrak{p}(U_{\mathfrak{p}})= \{1\}$.

\end{prop}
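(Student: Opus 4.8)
The plan is to reduce both equivalences to the corresponding local statements via the compatibility diagram of Theorem \ref{global CFT}(2), the crucial point being the identification of the image of the local reciprocity map with the decomposition group.

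First I would fix a prime $\mathfrak{P}$ of $F$ lying above $\mathfrak{p}$ and pass to the local extension $F_{\mathfrak{P}}/k_{\mathfrak{p}}$. Since $F/k$ is Abelian, the decomposition group $D_{\mathfrak{P}/\mathfrak{p}}$ and the inertia group $I_{\mathfrak{P}/\mathfrak{p}}$ are independent of the choice of $\mathfrak{P}$, and there is a canonical identification $\mathrm{Gal}(F_{\mathfrak{P}}/k_{\mathfrak{p}}) \cong D_{\mathfrak{P}/\mathfrak{p}} \subseteq \mathrm{Gal}(F/k)$. Composing the diagram of Theorem \ref{global CFT}(2) with the projection $\mathrm{Gal}(k^{\mathrm{ab}}/k) \to \mathrm{Gal}(F/k)$, I would show that $\rho_{F/k}\circ\iota_{\mathfrak{p}} \colon k_{\mathfrak{p}}^{\times} \to \mathrm{Gal}(F/k)$ factors as the local reciprocity map $\rho_{F_{\mathfrak{P}}/k_{\mathfrak{p}}} \colon k_{\mathfrak{p}}^{\times} \to \mathrm{Gal}(F_{\mathfrak{P}}/k_{\mathfrak{p}})$ followed by the inclusion $\mathrm{Gal}(F_{\mathfrak{P}}/k_{\mathfrak{p}}) = D_{\mathfrak{P}/\mathfrak{p}} \hookrightarrow \mathrm{Gal}(F/k)$. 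By Theorem \ref{local CFT}(1) the map $\rho_{F_{\mathfrak{P}}/k_{\mathfrak{p}}}$ is surjective, so its image is all of $\mathrm{Gal}(F_{\mathfrak{P}}/k_{\mathfrak{p}})$; hence $\rho_{F/k}\circ\iota_{\mathfrak{p}}(k_{\mathfrak{p}}^{\times}) = D_{\mathfrak{P}/\mathfrak{p}}$.

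For $(1)$ I would then note that $\mathfrak{p}$ is completely decomposed in $F/k$ exactly when $e_{\mathfrak{P}/\mathfrak{p}} = f_{\mathfrak{P}/\mathfrak{p}} = 1$, i.e. when $D_{\mathfrak{P}/\mathfrak{p}} = \{1\}$ (since $\# D_{\mathfrak{P}/\mathfrak{p}} = e_{\mathfrak{P}/\mathfrak{p}} f_{\mathfrak{P}/\mathfrak{p}}$); combined with the identification above this gives the first equivalence. For $(2)$ I would use the commutative diagram of Theorem \ref{local CFT}(2), which shows that $\rho_{k_{\mathfrak{p}}}(U_{\mathfrak{p}})$ is precisely the inertia subgroup $\mathrm{Gal}(k_{\mathfrak{p}}^{\mathrm{ab}}/k_{\mathfrak{p}}^{\mathrm{ur}})$; restricting automorphisms to $F_{\mathfrak{P}}$ identifies $\rho_{F_{\mathfrak{P}}/k_{\mathfrak{p}}}(U_{\mathfrak{p}})$ with the inertia group $I_{\mathfrak{P}/\mathfrak{p}} \subseteq \mathrm{Gal}(F_{\mathfrak{P}}/k_{\mathfrak{p}})$. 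Transporting this into $\mathrm{Gal}(F/k)$ yields $\rho_{F/k}\circ\iota_{\mathfrak{p}}(U_{\mathfrak{p}}) = I_{\mathfrak{P}/\mathfrak{p}}$, which is trivial if and only if $e_{\mathfrak{P}/\mathfrak{p}} = 1$, i.e. $\mathfrak{p}$ is unbranched.

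The step I expect to be the main obstacle is making the factorization of $\rho_{F/k}\circ\iota_{\mathfrak{p}}$ through $\rho_{F_{\mathfrak{P}}/k_{\mathfrak{p}}}$ fully rigorous: one must verify that the natural map $\mathrm{Gal}(k_{\mathfrak{p}}^{\mathrm{ab}}/k_{\mathfrak{p}}) \to \mathrm{Gal}(k^{\mathrm{ab}}/k) \to \mathrm{Gal}(F/k)$ coincides, under the identification $\mathrm{Gal}(F_{\mathfrak{P}}/k_{\mathfrak{p}}) \cong D_{\mathfrak{P}/\mathfrak{p}}$, with restriction of local automorphisms to $F_{\mathfrak{P}}$, and that this restriction is compatible with the unramified/inertia data of Corollary \ref{cor of local CFT} and Theorem \ref{local CFT}(2). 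Once the local reciprocity map is correctly matched with the decomposition group, and its restriction to $U_{\mathfrak{p}}$ with inertia, both equivalences follow at once.
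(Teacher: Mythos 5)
The paper states this proposition without proof, presenting it as part of its review of classical class field theory and deferring to \cite{kato} and \cite{Neukirch}, so there is no argument of the paper's to compare against; your proof is the standard one and is correct, reducing both parts to the identifications $\rho_{F/k}\circ\iota_{\mathfrak{p}}(k_{\mathfrak{p}}^{\times})=D_{\mathfrak{P}/\mathfrak{p}}$ and $\rho_{F/k}\circ\iota_{\mathfrak{p}}(U_{\mathfrak{p}})=I_{\mathfrak{P}/\mathfrak{p}}$. The only points worth making fully explicit are that the equality $\rho_{F_{\mathfrak{P}}/k_{\mathfrak{p}}}(U_{\mathfrak{p}})=I_{\mathfrak{P}/\mathfrak{p}}$ requires the surjectivity of $U_{\mathfrak{p}}\to\mathrm{Gal}(k_{\mathfrak{p}}^{\mathrm{ab}}/k_{\mathfrak{p}}^{\mathrm{ur}})$ (the diagram of Theorem \ref{local CFT}(2) as literally stated only yields the containment needed for the ``if'' direction of (2)), and that archimedean primes must be treated separately using the paper's convention $U_{\mathfrak{p}}=k_{\mathfrak{p}}^{\times}$, since Theorem \ref{local CFT}(2) is only asserted in the non-archimedean case.
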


%%%%%%%%%%%%%%%%%%%%%%%%%%%%%%%%%%%%%%%%%%%%%%%%%%%%%%%%%%%%%%%%%%%%%%%%%%%%%%%%%%%%%%%%%%　\`　3章 Hilbert theory

\section{Hilbert Theory for 3-manifolds}

\noindent
\indent

In this section, we review a Hilbert Theory for 3-manifolds according to [Mo3, Chap.5]. 
We also show a relation between the linking number and the decomposition law of a knot in a finite Abelian covering, which generalizes a result in [Mo3, Chap.5]. \\

Let $M$ be an integral homology 3-sphere, namely $M$ be a oriented colosed 3-manifold and $H_i(M) \cong H_i(S^3)$ for each $i \in\mathbb{Z}$, 
and let $h:N\rightarrow M$ be a finite Galois covering of connected oriented closed 3-manifolds branched over a link $L \subset M $.
Let $X_L := M\setminus L$, $Y_L := N\setminus h^{-1}(L)$, 
and let $n$ denote the covering degree of $Y_L$ over $X_L$ so that $n =\# \mathrm{Gal}(Y_L/X_L) = \# \mathrm{Gal}(N/M)$. 
Let $K$ be a knot in $M$ which is a component of $L$ or disjoint from $L$, and suppose $h^{-1}(K)= K_1 \cup \cdots \cup K_r$.
Gal$(N/M)$ acts transitively on the set of knots $S_K:=\{\,K_1,\ldots,K_r\,\}$ lying over $K$.
We call the stabilizer $D_{K_i}$ of $K_i$ the $decomposition\ group$ of $K_i$:
$$
D_{K_i}:=\{\, g\in \mathrm{Gal}(N/M) \ |\ g(K_i)=K_i \,\}.
$$
Sinse we obtain the bijection $\mathrm{Gal}(N/M)/D_{K_i} \cong S_K$ for each $i$, 
$\#D_{K_i}=n/r$ is independent of $K_i$.

Since each $g \in \mathrm{Gal}(N/M)$ induces a homeomorphism $g|_{\partial V_{K_i}}:\partial V_{K_i} \rightarrow \partial V_{g(K_{i})}$, 
$g|_{\partial V_{K_i}}$ is a covering transformation of $\partial V_{K_i}$ over $\partial V_K$, 
so we have following isomorphism,
$$
D_{K_i} \cong \mathrm{Gal}(\partial V_{K_i}/\partial V_K).
$$
The Fox completion of the subcovering space of $Y_L$ over $X_L$ corresponding to $D_{K_i}$ is called the $decomposition\ covering\ space$ of $K_i$ and this space is denoted by $Z_{Ki}$.
The map $g \mapsto \bar{g}:=g|_{\partial V_{K_i}}$ induces the homomorphism
$$D_{K_i} \rightarrow \mathrm{Gal}(K_i/K)$$
whose kernel is called the $inertia\ group$ of $K_i$ and is denoted by $I_{K_i}$:
$$I_{K_i}:=\{\, g \in D_{K_i} \ |\ \bar{g}=\text{id}_{K_i} \,\}. $$
If $K_j=g(K_i)$ $(g\in \mathrm{Gal}(N/M))$, we obtain $I_{K_j}=gI_{K_i}g^{-1}$ and hence $\#I_{K_i}$ is independent of $K_i$.
Set $e = e_K := \# I_{K_i}$. 
The Fox completion of the subcovering space of $Y_L$ over $X_L$ corresponding to $I_{K_{i}}$ is called the $inertia\ covering\ space$ of $K_{i}$ and denoted by $T_{K_{i}}$:
\begin{center}
$\xymatrix{N \ar[r] & T_{K_{i}} \ar[r] & Z_{K_{i}} \ar[r] & M}$\\
$\xymatrix{ \{1\} \ar@{-}[r]^{e} & I_{K_{i}} \ar@{-}[r]^{f} & D_{K_{i}} \ar@{-}[r]^(0.4){r} & \mathrm{Gal}(N/M)}$.
\end{center}

Here we have the equalities
$$
\#D_{K_i}=ef, \ \#I_{K_i}=e,\  \#\mathrm{Gal}(K_i/K)=:f.
$$
By comparing the orders, we see that the homomorphism $D_{K_i} \rightarrow \mathrm{Gal}(K_i/K)$ is surjective:
$$1\rightarrow I_{K_i} \rightarrow D_{K_i} \rightarrow \mathrm{Gal}(K_i/K) \rightarrow 1 \ \ \ \ \ \ \ \ \  \text{(exact)}.$$

\par Suppose $h:N \to M$ is an Abelian covering. Then $D_{K_i}$ and $I_{K_i}$, are independent of $K_i$ lying over $K$ and so we denote them by $D_K$ and $I_K$ respectively.

\begin{thm}[Mo3, Chap.5]　Let the notations be as above and suppose $h:N\to M$ is an Abelian covering.
Then we have the exact sequence %%%%%%%%%kokkokara
$$1\rightarrow I_{K} \rightarrow D_{K} \rightarrow \mathrm{Gal}(K_i/K) \rightarrow 1 $$
and the equality 
$$ n= efr.$$
\end{thm}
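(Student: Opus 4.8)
The plan is to derive both assertions from the material already assembled for a general finite Galois covering, using only the extra hypothesis that $\mathrm{Gal}(N/M)$ is Abelian. First I would explain why $D_{K_i}$ and $I_{K_i}$ do not depend on the choice of $K_i$ lying over $K$. For a general Galois covering one has the conjugation relations $D_{g(K_i)} = g D_{K_i} g^{-1}$ and $I_{g(K_i)} = g I_{K_i} g^{-1}$ for $g \in \mathrm{Gal}(N/M)$, and since the action on $S_K$ is transitive, every $K_j$ is of the form $g(K_i)$. When $\mathrm{Gal}(N/M)$ is Abelian, conjugation is trivial, so $D_{g(K_i)} = D_{K_i}$ and $I_{g(K_i)} = I_{K_i}$ for all $g$; hence the groups are common to all knots over $K$ and may be written $D_K$ and $I_K$.

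With this in place, the exact sequence is simply the specialization of the exact sequence $1 \to I_{K_i} \to D_{K_i} \to \mathrm{Gal}(K_i/K) \to 1$ already obtained above. The map $D_K \to \mathrm{Gal}(K_i/K)$ is the restriction $g \mapsto \bar g = g|_{\partial V_{K_i}}$ passed to $K_i$, its kernel is $I_K$ by the very definition of the inertia group, and surjectivity was already read off from the order count $\#D_{K_i} = ef$ together with $\#I_{K_i} = e$ and $\#\mathrm{Gal}(K_i/K) = f$. So no new argument is required beyond recording that the three groups are now independent of $i$.

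For the numerical equality I would combine two counts. On one hand, the orbit--stabilizer relation for the transitive action of $\mathrm{Gal}(N/M)$ on $S_K$ gives $n = \#\mathrm{Gal}(N/M) = \#S_K \cdot \#D_{K_i} = r \cdot \#D_K$, which is exactly the relation $\#D_{K_i} = n/r$ noted above. On the other hand, the exact sequence yields $\#D_K = \#I_K \cdot \#\mathrm{Gal}(K_i/K) = ef$. Substituting gives $n = r \cdot ef = efr$.

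Since the decomposition/inertia formalism, the identification $D_{K_i} \cong \mathrm{Gal}(\partial V_{K_i}/\partial V_K)$, the order formula $\#D_{K_i} = ef$, and the orbit count $\#D_{K_i} = n/r$ are all established before the statement, the theorem is essentially an assembly step. The only point genuinely using the Abelian hypothesis is the independence of $D_{K_i}$ and $I_{K_i}$ from $i$, and this I expect to be the one place to be careful, since without commutativity one only obtains conjugacy classes of these groups rather than well-defined subgroups $D_K$ and $I_K$.
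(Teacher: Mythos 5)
Your proposal is correct and follows essentially the same route as the paper, which states this theorem as a citation of [Mo3, Chap.\ 5] and effectively proves it in the preceding discussion: the conjugation relations plus transitivity give the independence of $D_{K_i}$ and $I_{K_i}$ from $i$ in the Abelian case, the exact sequence is the specialization of the one already displayed, and $n = efr$ follows from combining $\#D_{K_i} = n/r$ (orbit--stabilizer) with $\#D_{K_i} = ef$. You correctly identify the Abelian hypothesis as the one point doing real work, namely turning conjugacy classes of decomposition and inertia groups into well-defined subgroups $D_K$ and $I_K$.
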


Finally, let us extend the relation between linking number and the decomposition law of a knot in a finite Abelian covering.

\begin{prop}
Let $L:=K_1 \cup \cdots \cup K_r$ be an r-component link in an integral homology $3$-sphere $M$. 
For  given integers $n_i \geq 2$, 
let $\psi: \pi_1(X_L) \rightarrow \mathbb{Z}/n_1\mathbb{Z} \times \cdots \times \mathbb{Z}/n_r\mathbb{Z}$ 
be the homomorphism sending a each meridian of $K_i$ to $(0,\ldots,0,\overset{i}{\check{1}},0,\ldots,0)$. 
Let $Y_L \to X_L$ be the covering corresponding to Ker($\psi$), whose covering degree is $n:=n_1n_2\cdots n_r$, and let $h:N \to M$ be its Fox completion.
Then, for a knot $K$ in $M$ disjoint from $L$, the covering degree of $K$ in $h:N \to M$ coincides with the order of
$(\mathrm{lk}(K,K_1)\ \mathrm{mod}\ n_1,\ldots,\mathrm{lk}(K,K_i)\ \mathrm{mod}\ n_i,\ldots,\mathrm{lk}(K,K_r)\ \mathrm{mod}\ n_r)$ in $\mathbb{Z}/n_1\mathbb{Z} \times \cdots \times \mathbb{Z}/n_r\mathbb{Z}$. 
\end{prop}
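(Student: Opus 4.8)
The plan is to reduce the statement to a homological computation together with elementary covering--space theory over a tubular neighborhood of $K$. First I would use the hypothesis that $M$ is an integral homology $3$-sphere to identify $H_1(X_L)$. By Alexander duality (or a Mayer--Vietoris argument applied to $M=V_L\cup X_L$, where $V_L$ is a tubular neighborhood of $L$), one has $H_1(X_L)\cong\mathbb{Z}^r$, freely generated by the meridians $\mu_1,\dots,\mu_r$ of $K_1,\dots,K_r$. Moreover the isomorphism $H_1(X_L)\xrightarrow{\sim}\mathbb{Z}^r$ is realized by $[\gamma]\mapsto(\mathrm{lk}(\gamma,K_1),\dots,\mathrm{lk}(\gamma,K_r))$, since $\mathrm{lk}(\mu_i,K_j)=\delta_{ij}$; hence for the knot $K$ one obtains the homological identity $[K]=\sum_{i=1}^{r}\mathrm{lk}(K,K_i)\,\mu_i$ in $H_1(X_L)$. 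This is really just the homological characterization of the linking number, valid because $H_1(M)=0$.

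Next, since the target of $\psi$ is Abelian, $\psi$ factors through the Hurewicz map as $\bar\psi\colon H_1(X_L)\to \mathbb{Z}/n_1\mathbb{Z}\times\cdots\times\mathbb{Z}/n_r\mathbb{Z}$, and by the previous paragraph $\bar\psi([K])=(\mathrm{lk}(K,K_1)\bmod n_1,\dots,\mathrm{lk}(K,K_r)\bmod n_r)$. Because the $\mu_i$ map to the standard generators, $\psi$ is surjective, so $\mathrm{Gal}(Y_L/X_L)\cong \mathbb{Z}/n_1\mathbb{Z}\times\cdots\times\mathbb{Z}/n_r\mathbb{Z}=:G$ and the covering is Abelian; thus $D_K$ and $I_K$ are well defined (independent of the component lying over $K$). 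Since $K$ is disjoint from $L$ it is unbranched, so $I_K=\{1\}$ and the covering degree of $K$ is $f=\#D_K$, the common degree of each component of $h^{-1}(K)$ over $K$.

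Finally I would compute $f$ locally. Choose a tubular neighborhood $V_K\cong S^1\times D^2$ disjoint from $L$, so that $V_K\subset X_L$ and $h$ is unbranched over $V_K$; hence the Fox completion does not alter the covering there and it suffices to analyze the restricted covering $h^{-1}(V_K)\to V_K$. Here $\pi_1(V_K)\cong\mathbb{Z}$ is generated by the core $[K]$, and the composite $\pi_1(V_K)\to\pi_1(X_L)\xrightarrow{\psi}G$ has image the cyclic group $\langle\psi([K])\rangle$. By the classification of coverings of the solid torus, $h^{-1}(V_K)$ has $[G:\langle\psi([K])\rangle]$ connected components, each a solid torus mapping to $V_K$ with degree $\#\langle\psi([K])\rangle$, and the core of each such component maps to $K$ with the same degree. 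Therefore $f=\#D_K=\#\langle\psi([K])\rangle$, which is exactly the order of $(\mathrm{lk}(K,K_1)\bmod n_1,\dots,\mathrm{lk}(K,K_r)\bmod n_r)$ in $G$, as asserted. The main obstacle is the first step: pinning down $H_1(X_L)$ and the identity $[K]=\sum_i\mathrm{lk}(K,K_i)\,\mu_i$ rigorously from the integral homology sphere hypothesis; once this is in place the remaining covering--theoretic computation is routine, the only care needed being to confirm that unbranchedness over $V_K$ allows one to ignore the Fox completion.
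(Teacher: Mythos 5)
Your proof is correct and takes essentially the same approach as the paper: since $K$ is disjoint from $L$ it is unbranched, so its covering degree equals the order of the image of $[K]$ under $H_1(X_L)\to \mathrm{Gal}(N/M)\cong \mathbb{Z}/n_1\mathbb{Z}\times\cdots\times\mathbb{Z}/n_r\mathbb{Z}$, and that image is the tuple of linking numbers because the meridians freely generate $H_1(X_L)$ in an integral homology sphere. The only difference is one of detail: you justify the identification of the covering degree with the order of $\psi([K])$ by an explicit analysis of the covering restricted to a solid torus neighborhood of $K$, whereas the paper obtains the same fact by citing its Hilbert theory (Theorem 3.1) and the Frobenius element $\sigma_K$.
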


\begin{proof}
Let $K'$ be a component of $h^{-1}(K)$.
Since $I_{K'}=I_K=\{1\}$, by Theorem 3.1, the covering degree of $K$ in $h:N\to M$ is the order of a generator $\sigma_{K}$ of $\mathrm{Gal}(K'/K) \cong D_K$ in $\mathrm{Gal}(N/M)$,
where $\sigma_{K}$ corresponds to a loop $K$.
Since $[K]$ is sent to $(\mathrm{lk}(K,K_1)\ \mathrm{mod}\ n_1,\ldots, \mathrm{lk}(K,K_r)\ \mathrm{mod}\ n_r)$ by %%%%%
the natural homomorphism $H_1(X_L) \to \mathrm{Gal}(N/M) \cong \mathbb{Z}/n_1\mathbb{Z} \times \cdots \times \mathbb{Z}/n_r\mathbb{Z}$
given by the Hurewicz map and Galois theory, our assertion follows.
%Let $d$ is the order of $(\mathrm{lk}(K,K_1),\ldots,\mathrm{lk}(K,K_i),\ldots,\mathrm{lk}(K,K_r)) \in \mathbb{Z}/n_1\mathbb{Z} \times \cdots \times \mathbb{Z}/n_r\mathbb{Z}$. 
%Let $K'$ be a connected component of $h^{-1}(K)$. Then $f:=\# \mathrm{Gal} (K'/K)$. We show that $d = f$.
%$[K] = (\mathrm{lk}(K,K_1),\ldots,\mathrm{lk}(K,K_i),\ldots,\mathrm{lk}(K,K_r)) \in H_1(X_L)/h_*(H_1(Y_L)) \cong \mathrm{Gal}(N/M) \cong \mathbb{Z}/n_1\mathbb{Z} \times \cdots \times \mathbb{Z}/n_r\mathbb{Z}$.
%So $d[K] \in h_*(H_1(Y_L))$. It implies $f|d$. 
%On the other hands, $h_*([K'])=f[K] = 0 $ implies $d|f$. Hence $f=d$.
\end{proof}

In particular, suppose $K$ is not component of $L$, so that $K$ is unbranched in $N$.
Then the equality $fr=n$ implies that $K$ is decomposed completely in $N$ (i.e. decomposed into an $n$-component link) if and only if for each $i$, $\mathrm{lk}(K_i,K)\equiv 0$ mod $n_i$.

%%%%%%%%%%%%%%%%%%%%%%%%%%%%%%%%%%%%%%%%%%%%%%%%%%%%%%%%%%%%%%%%%%%%%%%%%%%%%%%%%%%%%%%%%%　\`　4章 局所類体論
%e_k=(0,\cdots,0,\overset{k}{\check{1}},0,\cdots,0)
\section{Local class field theory for tori}

\noindent
\indent

In this section, we present a topological analogue of local class field theory for 2-dimensional tori. \\

\par Let $K$ be a fixed knot in an orientable 3-manifold and let $V_K$ be a tubular neighborhood of $K$. 
Let $T_K$ be the boundary of $V_K$, $T_K=\partial V_K$.
Then $T_K$ is a 2-dimensional torus. 
According to (1.3), $T_K$ and $V_K$ are regarded as analogues of a $\mathfrak{p}$-adic local field $k_\mathfrak{p}$ and 
the integer ring $\mathcal{O}_k$.
Let $m$ and $l$ be a meridian and a longitude on $T_K$, respectively.
The inclusion $T_K \hookrightarrow V_K$ induces the homomorphism
$v_K: H_1(T_K) \rightarrow H_1(V_K)=\mathbb{Z}[l]$ whose kernel is $\mathbb{Z}[m]$. 
Thus we have the exact sequence
$$
0 \longrightarrow \mathbb{Z}[m] \longrightarrow H_1(T_K) \longrightarrow \mathbb{Z}[l] \longrightarrow 0 \leqno{(4.1)}
$$
which may be regarded as an analogue of the exact sequence (2.1). \\
%\begin{center}
%\begin{tabular}{|c|c|}  \hline 
%$0 \rightarrow \mathbb{Z}[m] \rightarrow H_1(T_K) \xrightarrow{v_K} \mathbb{Z}[l] \rightarrow 0 $　&
%$0 \rightarrow U_{k_\mathfrak{p}} \rightarrow k_{\mathfrak{p}}^\times \xrightarrow{v_\mathfrak{p}} \mathbb{Z} \rightarrow 0 $\\ \hline
%\end{tabular}
%\end{center}

\par Let $T_K^{\mathrm{ab}}$ be the maximal Abelian covering of $T_K$ (which is the universal covering).
Since $T_K \cong V_K \setminus K$, (unramified) coverings of $T_K$ correspond to ramified covering of $V_K$ along $K$.
Let $T_K^{\mathrm{ur}}$ be the maximal covering of $T_K$ which comes from the maximal (unramified) covering of $V_K$.
%So the Galois group Gal$(\tilde{K}/K) \cong \mathbb{Z}$.
Then we have the following theorem which may be regarded as an analogy of Theorem \ref{local CFT}.

	\begin{thm}[Local class field theory for tori] \label{local CFT for tori}  %局所類体論
		There is a canonical isomorphism $$\rho_{T_K}:H_1(T_K) \rightarrow \mathrm{Gal} (T_K^{\mathrm{ab}}/T_K)$$ which satisfies following properties:
                 
		$(1)$ For any finite Abelian covering $h:  R \rightarrow T_K$, $\rho_{T_K}$ induces the isomorphism
 		$$ 
                	\rho_{R/T_K}: H_1(T_K)/h_*(H_1(R)) \cong \mathrm{Gal}(R/T_K).
                $$

		$(2)$ We have the following commutative diagram 
                 \[\xymatrix{
			0 \ar[r] & \mathbb{Z}[m] \ar[r] \ar[d]^{\rho_{T_K} \vert_{\mathbb{Z}[m]}} & H_1(T_K) \ar[d]^{\rho_{T_K}} \ar[r]^{v_K}  & \mathbb{Z}[l] \ar[d]^{} \ar[r] & 0 \\
			0 \ar[r] & \mathrm{Gal}(T_K^{\mathrm{ab}}/T_K^{\mathrm{ur}}) \ar[r] & \mathrm{Gal}(T_K^{\mathrm{ab}}/T_K) \ar[r] & \mathrm{Gal}(T_K^{\mathrm{ur}}/T_K) \ar[r] & 0 \\
			}\]
                        where horizontal sequences are exact and vertical maps are all isomorphisms.
	\end{thm}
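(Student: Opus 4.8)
The plan is to deduce everything from elementary covering space theory for the 2-torus $T_K$, exploiting that its fundamental group is already abelian. Since $T_K$ is a 2-dimensional torus, $\pi_1(T_K)\cong\mathbb{Z}^2$ is abelian, so the Hurewicz homomorphism is an isomorphism $\pi_1(T_K)\xrightarrow{\sim}H_1(T_K)$, and the maximal Abelian covering coincides with the universal covering, $T_K^{\mathrm{ab}}=\widetilde{T_K}\cong\mathbb{R}^2$. By the standard identification of the deck transformation group of a universal covering, $\mathrm{Gal}(T_K^{\mathrm{ab}}/T_K)\cong\pi_1(T_K)$. I would then define $\rho_{T_K}$ as the composite $H_1(T_K)\xrightarrow{\sim}\pi_1(T_K)\xrightarrow{\sim}\mathrm{Gal}(T_K^{\mathrm{ab}}/T_K)$ of the inverse Hurewicz map with this identification; it is manifestly an isomorphism. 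In contrast to the arithmetic case no profinite completion intervenes, which is a pleasant feature of the topological side.

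For part $(1)$ I would invoke the Galois correspondence between connected coverings of $T_K$ and subgroups of $\pi_1(T_K)$. A connected finite Abelian covering $h:R\to T_K$ corresponds to a finite-index subgroup $h_*\pi_1(R)$; since $\pi_1(T_K)$ is abelian this subgroup is automatically normal, so $h$ is regular and $\mathrm{Gal}(R/T_K)\cong\pi_1(T_K)/h_*\pi_1(R)$. Transporting through the Hurewicz isomorphism gives $\mathrm{Gal}(R/T_K)\cong H_1(T_K)/h_*(H_1(R))$, and naturality of the Hurewicz map (equivalently, of the universal covering) shows that this isomorphism is exactly the one induced by $\rho_{T_K}$, yielding $\rho_{R/T_K}$.

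For part $(2)$ the main work is to identify $T_K^{\mathrm{ur}}$ explicitly. Writing $V_K\cong D^2\times S^1$ with core $K=\{0\}\times S^1$, we have $V_K\simeq S^1$ and $\pi_1(V_K)\cong H_1(V_K)=\mathbb{Z}[l]$, so the maximal unramified (= universal) covering of $V_K$ is $D^2\times\mathbb{R}\to D^2\times S^1$, which unrolls the longitude. Restricting it along $T_K\cong V_K\setminus K$ produces $T_K^{\mathrm{ur}}\cong S^1\times\mathbb{R}$, whose classifying subgroup of $\pi_1(T_K)$ is exactly $\ker(\pi_1(T_K)\to\pi_1(V_K))=\ker(v_K)=\mathbb{Z}[m]$. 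Consequently $\mathrm{Gal}(T_K^{\mathrm{ab}}/T_K^{\mathrm{ur}})\cong\mathbb{Z}[m]$ and $\mathrm{Gal}(T_K^{\mathrm{ur}}/T_K)\cong H_1(T_K)/\mathbb{Z}[m]\cong\mathbb{Z}[l]$ via $v_K$. Feeding these identifications into $\rho_{T_K}$ matches the bottom exact sequence with $(4.1)$ term by term and makes the diagram commute, with all three vertical maps isomorphisms.

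The step I expect to be most delicate is precisely this identification of $T_K^{\mathrm{ur}}$: one must argue that the meridian subgroup $\mathbb{Z}[m]$ is the correct topological analogue of the inertia/unit group $U_{\mathfrak{p}}$ and that $v_K$ plays the role of the valuation $v_{\mathfrak{p}}$, i.e. that ``unramified along $K$'' corresponds exactly to coverings pulled back from $V_K$. Everything else is routine covering space theory, but verifying the compatibility of $\rho_{T_K}$ with the maps in the diagram requires care that the chosen generators $m,l$ and the Hurewicz identification are used consistently on both rows.
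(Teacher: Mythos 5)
Your proposal is correct and follows essentially the same route as the paper: define $\rho_{T_K}$ as the composite of the Hurewicz isomorphism with the Galois-theoretic identification $\pi_1(T_K)\cong\mathrm{Gal}(T_K^{\mathrm{ab}}/T_K)$, deduce $(1)$ from the Galois correspondence using that $\pi_1(T_K)$ and $\pi_1(R)$ are abelian, and prove $(2)$ by identifying $T_K^{\mathrm{ur}}$ with $S^1\times\mathbb{R}$ (the restriction of the universal covering $D^2\times\mathbb{R}\to V_K$), classified by $\mathbb{Z}[m]=\ker(v_K)$. The only cosmetic difference is that the paper computes $\mathrm{Gal}(T_K^{\mathrm{ab}}/T_K^{\mathrm{ur}})\cong\pi_1(T_K^{\mathrm{ur}})=H_1(T_K^{\mathrm{ur}})\cong\mathbb{Z}[m]$ rather than reading it off directly as the classifying subgroup, which amounts to the same thing.
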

        
        \begin{proof}
Let $\eta_{T_K}:H_1(T_K) \to \pi_1(T_K)/[\pi_1(T_K),\pi_1(T_K)]$ be the Hurewicz isomorphism, where $[\pi_1(T_K),\pi_1(T_K)]$ is commutator subgroup of $\pi_1(T_K)$.
We define $\rho_{T_K}:H_1(T_K) \rightarrow \mathrm{Gal} (T_K^{\mathrm{ab}}/T_K)$ by the composite of $\eta_{T_K}$ with the isomorphism 
$\pi_1(T_K)/[\pi_1(T_K),\pi_1(T_K)] \cong \mathrm{Gal}(T_K^{\mathrm{ab}}/T_K)$ coming from Galois theory of covering spaces.

%Since $\mathrm{Gal} (T_K^{\mathrm{ab}}/T_K) \cong \pi_1(T_K)/[\pi_1(T_K),\pi_1(T_K)]$, where $[\pi_1(T_K),\pi_1(T_K)]$ is commutator subgroup, 
%let $\rho_{T_K}:H_1(T_K) \rightarrow \mathrm{Gal} (T_K^{\mathrm{ab}}/T_K)$ be the Hurewicz isomorphism.

(1) Since $h:R\rightarrow T_K$ is the finite Abelian covering, $R$ is torus. 
By Galois theory we  have $\pi_1(T_K)/h_*(\pi_1(R)) \cong \mathrm{Gal}(R/T_K)$. 
Since $\pi_1(T_K)$ and $\pi_1(R)$ are Abelian groups, we have $\pi_1(T_K)=H_1(T_K)$ and $\pi_1(R)=H_1(R)$. 
Hence $\rho_{T_K}$ induces the isomorphism $H_1(T_K)/h_*(H_1(R)) \cong \mathrm{Gal}(R/T_K)$.
%Therefore, the covering space $R$ is corresponding to a subgroup $p\mathbb{Z}\times q\mathbb{Z} \subset H_1(T) \cong \mathbb{Z}[m] \times \mathbb{Z}[l]$, where $p, q \in \mathbb{Z}$.
%So $h_*(H_1(R)) \cong p\mathbb{Z}\times q\mathbb{Z}$.
%The isomorphism $\mathrm{Gal}(R/ T_K) \cong \pi_1(T_K) / h_*(\pi_1(R))$ is coming from Galois theory of covering space.
%In this case, since $\pi_1(T_K)$ is Abelian group, $\pi_1(T_K) \cong H_1(T_K)$, and $\pi_1(R) \cong H_1(R)$.
%So $H_1(T_K)/h_*(H_1(R)) \cong \mathrm{Gal}(R/T_K)$.

(2) The upper horizontal exact sequence is nothing but (4.1). The lower horizontal sequence is coming from Galois theory.
First, we obtain the following isomorphism:
	\begin{eqnarray*}
\mathrm{Gal}(T_K^{\mathrm{ur}}/T_K) & \cong & H_1(T_K)/h_*(H_1(T_K^{\mathrm{ur}})) \\
					     & \cong & (\mathbb{Z}[m] \times \mathbb{Z}[l])/(\mathbb{Z}[m] \times 0) \\
  				      	     & \cong & \mathbb{Z}[l].
	\end{eqnarray*}
Then we consider following isomorphism, $ \mathrm{Gal}(T_K^{\mathrm{ab}}/T_K^{\mathrm{ur}}) \cong \pi_1(T_K^{\mathrm{ur}})/h_*(\pi_1(T_K^{\mathrm{ab}}))$.
Since $\pi_1(T_K)$ is Abelian group, $T_K^{\mathrm{ab}}$ is the universal covering. 
Therefore we have $\mathrm{Gal}(T_K^{\mathrm{ab}}/T_K^{\mathrm{ur}}) \cong \pi_1(T_K^{\mathrm{ur}}) = H_1(T_K^{\mathrm{ur}})$.
By the construction of $T_K^{\mathrm{ur}}$, $T_K^{\mathrm{ur}}$ is homeomorphic to $S^1\times \mathbb{R}$, whose $S^1$ corresponds to a meridian on $T_K$.
Hence $H_1(T_K^{\mathrm{ur}}) \cong \mathbb{Z}[m]$.
        \end{proof}
        
 \begin{cor}\label{cor of local CFT for tori} 
 There is the one to one correspondence between the set of finite unbranched coverings of $V_K$ and 
 the set of finite index subgroups of $H_1(T_K)$ containing $\mathbb{Z}[m]$.
 \end{cor}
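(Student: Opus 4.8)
The plan is to realize the asserted bijection concretely via restriction to the boundary, and to identify it with the purely algebraic bijection $G \mapsto v_K^{-1}(G)$ supplied by the exact sequence (4.1). Throughout I take coverings to be connected, to match the field-extension side of Corollary \ref{cor of local CFT}. First I would recall that, since $V_K$ is a tubular neighborhood of a knot, it is a solid torus and hence homotopy equivalent to its core circle, so $\pi_1(V_K) = H_1(V_K) = \mathbb{Z}[l]$, with the meridian $m$ bounding a disk in $V_K$ and the longitude $l$ generating $\pi_1(V_K)$. By the Galois theory of covering spaces, finite (connected) unbranched coverings of $V_K$ then correspond bijectively to finite index subgroups $G$ of $\mathbb{Z}[l]$.

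Next I would analyze the boundary restriction, which is the source of the correspondence. Given a finite unbranched covering $p: W \rightarrow V_K$ classified by $G \leq \pi_1(V_K)$, its restriction $\partial W = p^{-1}(T_K) \rightarrow T_K$ is exactly the pullback of $p$ along the inclusion $\iota: T_K \hookrightarrow V_K$. By the standard pullback formula for coverings (a loop $\gamma$ in $T_K$ lifts to a loop iff $\iota_*[\gamma] \in G$), this boundary covering is classified by the subgroup $\iota_*^{-1}(G) = v_K^{-1}(G) \leq H_1(T_K)$. Since $\ker(v_K) = \mathbb{Z}[m]$ by (4.1), every subgroup of this form contains $\mathbb{Z}[m]$; and since $v_K$ is surjective onto $\mathbb{Z}[l]$ with kernel $\mathbb{Z}[m]$, the assignment $G \mapsto v_K^{-1}(G)$ is a bijection between the finite index subgroups of $\mathbb{Z}[l]$ and the finite index subgroups of $H_1(T_K)$ containing $\mathbb{Z}[m]$. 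Composing the two bijections yields the claimed correspondence.

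I would also note that the statement can be read off directly from Theorem \ref{local CFT for tori}: the diagram in part (2) exhibits $\rho_{T_K}$ restricting to an isomorphism $\mathbb{Z}[m] \cong \mathrm{Gal}(T_K^{\mathrm{ab}}/T_K^{\mathrm{ur}})$, so under the reciprocity isomorphism the intermediate covering $T_K^{\mathrm{ur}}$ corresponds to the subgroup $\mathbb{Z}[m]$. A finite covering of $T_K$ arises by restriction from an unbranched covering of $V_K$ precisely when it is dominated by $T_K^{\mathrm{ur}}$, i.e. precisely when its classifying subgroup contains $\mathbb{Z}[m]$; combined with part (1) this reproduces the bijection.

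The one point requiring genuine care is connectedness: I must verify that the boundary covering $\partial W$ is connected, so that it honestly corresponds to a single connected covering and the two classifications line up without spurious extra components. This holds because $\iota_* = v_K$ is surjective, whence the composite $H_1(T_K) \xrightarrow{v_K} \mathbb{Z}[l] \rightarrow \mathbb{Z}[l]/G$ is surjective and the pullback is forced to be connected. Granting this, both injectivity (distinct $G$ give distinct $v_K^{-1}(G)$) and surjectivity (every finite index subgroup containing $\mathbb{Z}[m]$ equals $v_K^{-1}(v_K(H))$) follow at once from the exactness of (4.1), and no further computation is needed.
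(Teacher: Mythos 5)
Your proposal is correct, and it is essentially a fully worked-out version of what the paper leaves implicit: the paper's entire proof is the single sentence ``The commutative diagram of (2) implies the corollary,'' whereas you supply the actual mechanism. Your third paragraph reproduces the paper's intended reading (under $\rho_{T_K}$ the intermediate covering $T_K^{\mathrm{ur}}$ corresponds to $\mathbb{Z}[m]$, so subcoverings of $T_K^{\mathrm{ur}}$ --- equivalently, boundary restrictions of unbranched coverings of $V_K$ --- are exactly those classified by subgroups containing $\mathbb{Z}[m]$). Your main argument, however, bypasses the reciprocity map entirely and runs directly through covering-space theory: $\pi_1(V_K)=\mathbb{Z}[l]$ classifies the unbranched coverings, the boundary restriction is the pullback along $T_K\hookrightarrow V_K$ classified by $v_K^{-1}(G)$, and the exact sequence (4.1) gives the algebraic bijection $G\mapsto v_K^{-1}(G)$ onto finite index subgroups containing $\ker(v_K)=\mathbb{Z}[m]$. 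This buys two things the paper does not make explicit: the correspondence is realized concretely (a covering goes to the classifying subgroup of its boundary restriction, exactly parallel to Corollary \ref{cor of local CFT}), and the connectedness of the boundary covering --- the one point where the argument could silently fail --- is verified via surjectivity of $v_K$, which forces the monodromy action on the fiber to be transitive. I see no gap.
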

 
 \begin{proof}
 The commutative diagram of (2) implies the corollary. 
 \end{proof}

\begin{defn} Since we have $H_1(T_K)=\mathbb{Z}[m]\times \mathbb{Z}[l]$, 
we can write an element $a\in H_1(T_K)$ as $a=(q,p) \in \mathbb{Z}^2$ if $a=q[m]+p[l]$. 
We call $q$ the {\it meridian component} of $a$ and $p$ the {\it longitude component} of $a$. 
The longitude component represents the value of $v_K$.
\end{defn}

%%%%%%%%%%%%%%%%%%%%%%%%%%%%%%%%%%%%%%%%%%%%%%%%%%%%%%%%%%%%%%%%%%%%%%%%%%%%%%%%%%%%%%%%%%　\` 5章大域類体論

\section{Global class field theory for 3-manifolds}
\noindent
\indent

In this section, let $M$ be a closed orientable 3-manifold, and for a certain set $\mathcal{K}$ of knots in a 3-manifold $M$, 
we introduce the id\`ele group and id\`ele class group, and the global reciprocity homomorphism,
by getting the local theory in the section 4 together over all knots in $\mathcal{K}$.
We then establish an analogue of the isomorphism theorem in global class field theory.
Now, we define a set of knots $\mathcal{K}$.

	\begin{defn} %knotset def
		We call a set $\mathcal{K}$ of knots in $M$ {\it admissible}, if $\mathcal{K}$ satisfies the following conditions:

		(1) For each $K_i \in \mathcal{K}$, there exists a tubular neighborhood $V_{K_i}$ such that  $V_{K_i}\cap V_{K_j}=\emptyset $ if $K_{i}$, $K_{j} \in \mathcal{K}$ and $K_i \neq K_j$.
                
                (2) $\mathcal{K}$ contains generators of $H_1(M)$.
                
                (3) $\#\mathcal{K} = \# \mathbb{N}$.
	\end{defn}
        
%In this paper, we choose a tubular neighborhood of each $K \in \mathcal{K}$ so that it satisfies $V_{K_{i}} \cap V_{K_{j}}=\emptyset$ if $K_i \neq K_j$.
In this paper, we fix such an admissible set $\mathcal{K}$ of knots in $M$ and consider a pair $(M;\mathcal{K})$. 
And we identify with $\pi_1^{\mathrm{ab}}(X) \cong \mathrm{Gal}(X^{\mathrm{ab}}/X) \cong H_1(X)$ for any topological space $X$. 

	\begin{defn}[id\`ele group] We define the {\it id\`ele group} of $(M;\mathcal{K})$ by　%イデール定義
		$$I_{(M;\mathcal{K})}:=\left\{\, (a_K)_{K} \in \displaystyle\prod_{K \in \mathcal{K}}H_1(\partial V_K) \,|\, v_{K}(a_K)=0 \text{ for almost all } K \in \mathcal{K} \,\right \}.$$
	\end{defn}　\\

%space を入れる

Let $\mathcal{L}$ be a set of finite subsets of $\mathcal{K}$.
We define the order for $\mathcal{L}$ in the following way, if $L_\alpha , L_\beta \in \mathcal{L}$, $L_\alpha \leq L_\beta \xLeftrightarrow{def} L_\alpha \subset L_\beta$.
In this paper, for a finite subset $L \subset \mathcal{K}$, we denote by $X_L$ the exterior space $M \setminus L$.
Then for each $L_\alpha \leq L_\beta$, we consider the homomorphism
$$\varphi_{\alpha\beta}:\mathrm{Gal}(X_{L_\beta}^{\mathrm{ab}}/X_{L_\beta}) \rightarrow \mathrm{Gal}(X_{L_\alpha}^{\mathrm{ab}}/X_{L_\alpha})$$
which is induced by natural inclusion $\iota_{\alpha\beta}:X_{L_{\beta}} \hookrightarrow X_{L_{\alpha}}$.

When $L_\alpha \leq L_\beta \leq L_\gamma$, it satisfies $\varphi_{\alpha\gamma} = \varphi_{\alpha\beta}\circ \varphi_{\beta\gamma}$ from $\iota_{\alpha\gamma} = \iota_{\alpha\beta}\circ \iota_{\beta\gamma}$.
Then we define $\mathrm{Gal}(M;\mathcal{K})^{\mathrm{ab}}$ by the inverse limit of the Galois groups $\mathrm{Gal}(X_{L_\alpha}^{\mathrm{ab}}/X_{L_\alpha})$ with respect to $L_\alpha$:
$$
\mathrm{Gal}(M;\mathcal{K})^{\mathrm{ab}}
:=\varprojlim_{\alpha}\mathrm{Gal}(X_{L_\alpha}^{\mathrm{ab}}/X_{L_\alpha})
:=\left\{\,(a_\alpha)_\alpha \in \displaystyle\prod_{L_{\alpha} \in \mathcal{L}}\mathrm{Gal}(X_{L_\alpha}^{\mathrm{ab}}/X_{L_\alpha}) \,|\, \varphi_{\alpha\beta}(a_\beta)=a_\alpha \,\right\}.
$$
This group may be regarded as an analogue of the maximal Abelian Galois group Gal$(k^{\mathrm{ab}}/k)$ of a number field $k$. 

\par Now, we are going to define an analogue of the global reciprocity homomorphism
$$
\rho_M:I_{(M;\mathcal{K})} \rightarrow \mathrm{Gal}(M;\mathcal{K})^{\mathrm{ab}}
\leqno{(5.1)}
$$
as follows.
Firstly, for each $\alpha$ and $K \in \mathcal{K}$, the natural inclusion
$\iota_{K}^{\alpha}:\partial V_K \to X_{L_\alpha}$ induces the homomorphisms 
$\iota_{K*}^{\alpha}:H_1(\partial V_K) \to H_1(X_{L_\alpha})$ and 
$g_K^{\alpha}:\mathrm{Gal}(\partial V_K^{\mathrm{ab}}/\partial V_K) \to \mathrm{Gal}(X_{L_\alpha}^{\mathrm{ab}}/X_{L_\alpha})$ which fit in the commutative diagram

\begin{equation}
\vcenter{
                \xymatrix{
			H_1(\partial V_K) \ar[d]_{\iota_{K*}^\alpha} \ar[r]^(0.4){\rho_{\partial V_K}} \ar@{}[dr]|\circlearrowleft & \mathrm{Gal}(\partial V_K^{\mathrm{ab}}/\partial V_K) \ar[d]^{g_K^\alpha} \\
			H_1(X_{L_\alpha}) \ar[r]_(0.4){\eta_{X_{L_\alpha}}} 							& \mathrm{Gal}(X_{L_\alpha}^{\mathrm{ab}}/X_{L_\alpha}) \\
		}} \tag{5.2}
\end{equation}

when $\eta_{X_{L_\alpha}}$ is the isomorphism by the Hurewicz map. 
We let
$$
\lambda_K^\alpha:H_1(\partial V_K) \to \mathrm{Gal}(X_{L_\alpha}^{\mathrm{ab}}/X_{L_\alpha})
$$
be the composite $g_K^\alpha \circ \rho_{\partial V_K} = \eta_{X_{L_\alpha}} \circ \iota_{K*}^\alpha$ and define
$$
\psi_\alpha:I_{(M;\mathcal{K})} \to \mathrm{Gal}(X_{L_\alpha}^{\mathrm{ab}}/X_{L_\alpha})
$$
by
$$
\psi_\alpha((a_K)_K):=\displaystyle \sum_{K\in\mathcal{K}}\lambda_K^\alpha(a_K).
$$

Here the summation over $\mathcal{K}$ is finite, because %for $(a_K)_K \in I_{(M;\mathcal{K})}$,
longitude component of $a_K$ is 0 for almost all $K\in\mathcal{K}$ 
and the meridian component of $a_K$ is 0 in $H_1(X_{L_\alpha})$ for $K \notin  L_\alpha$.

Finally, we define the global reciprocity homomorphism $\rho_M:I_{(M;\mathcal{K})} \to \mathrm{Gal}(M;\mathcal{K})^{\mathrm{ab}}$ by
$$
\rho_M((a_K)_K):=(\psi((a_K)_K))_\alpha,
$$
noticing $\varphi_\alpha = \varphi_{\alpha\beta}\circ \psi_{\beta}$,

	\begin{eqnarray*}
\varphi_{\alpha\beta}\circ\psi_{\beta}((a_K)_K) & = & \varphi_{\alpha\beta}(\sum_{K\in\mathcal{K}}\iota_{K_*}^\beta (a_K)) \\
					     & = & \sum_{K\in\mathcal{K}}\iota_{\alpha\beta_{*}} \circ \iota_{K_*}^\beta (a_K)) \\
  				      	     & = & \sum_{K\in\mathcal{K}}\iota_{K_*}^\alpha (a_K) \\
                                             & = & \psi_\alpha((a_K)_K)
	\end{eqnarray*}
where each $\iota$ is induced by following diagram:

\[\xymatrix{
\partial V_K \ar[d]_{\iota_{K}^{\beta}} \ar[dr]^{\iota_{K}^{\alpha}} & \\
X_{L_\beta} \ar[r]^{\iota_{\alpha\beta}} & X_{L_\alpha}. \\
}\]

We define a principal id\`ele group, and id\`ele class group of manifolds as follows.

\begin{defn}[Principal id\`ele group, id\`ele class group] 
We define the {\it principal id\`ele group} of $(M;\mathcal{K})$ by
$$P_{(M;\mathcal{K})}:=\left\{\,(a_K)_K \in I_{(M;\mathcal{K})} \,|\,
 \displaystyle \sum_{K\in\mathcal{K}}\iota_{K_*}^\alpha (a_K) = 0 \in H_1(X_{L_\alpha}) \text{ for all finite subset } L_\alpha \subset \mathcal{K} \,\right\},$$ 
and define the {\it id\`ele class group} of $(M;\mathcal{K})$ by
$C_{(M;\mathcal{K})}:= I_{(M;\mathcal{K})} / P_{(M;\mathcal{K})}$.
\end{defn}

We note that our definitions of the id\`ele group, principal id\`ele group and 
id\`ele class group depend on a choice of an admissible set $\mathcal{K}$ of knots in $M$.

\begin{lem}
There exists an admissible set $\mathcal{K}$ of knots in $M$ which satisfies the following conditions:
(A) For any finite subset $L$ of $\mathcal{K}$ and any finite cover $h:N \rightarrow M$ branched over $L$, 
$h^{-1}(\mathcal{K})$ is an admissible set of knots in $N$.
\end{lem}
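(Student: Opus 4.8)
The plan is to build $\mathcal{K}$ as an increasing union $\mathcal{K}=\bigcup_{n\ge 0}\mathcal{K}_n$ of \emph{finite} sets of disjoint knots, adjoining knots stage by stage so that property (A) is forced in the limit. Keeping each $\mathcal{K}_n$ finite makes condition (1) of admissibility trivial to maintain: a finite union of knots has open dense complement in $M$, so any new knot can be isotoped to carry a tubular neighborhood disjoint from the finitely many neighborhoods already fixed; since those earlier neighborhoods are never altered, the limiting $\mathcal{K}$ has pairwise disjoint tubular neighborhoods. I would take $\mathcal{K}_0$ to be a finite set of disjoint knots representing a generating set of $H_1(M)$, which secures condition (2) for $\mathcal{K}$ itself, and adjoin an extra disjoint unknot at each stage so that $\#\mathcal{K}=\#\mathbb{N}$, giving condition (3).

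First I would dispose of the ``easy'' half of (A): conditions (1) and (3) pass to every preimage automatically. For a finite branched cover $h:N\to M$ the preimages $h^{-1}(V_K)$ of the disjoint tubular neighborhoods are disjoint, and each is a disjoint union of solid tori (an ordinary covering of the solid torus $V_K$ when $K$ is unbranched, a branched covering when $K\in L$), hence a union of tubular neighborhoods of the components of $h^{-1}(K)$; this is condition (1) for $h^{-1}(\mathcal{K})$. As $\mathcal{K}$ is countably infinite and each $h^{-1}(K)$ is a nonempty finite set of knots (at most $\deg h$ components), $h^{-1}(\mathcal{K})$ is again countably infinite, giving (3).

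The substance is condition (2) for the preimage, namely that $h^{-1}(\mathcal{K})$ contains generators of $H_1(N)$. I would handle this by a dovetailing argument over the countably many ``tasks'' $(L,h)$, where $L\subset\mathcal{K}$ is finite and $h:N\to M$ is a finite branched cover over $L$: these are countable, since finite subsets of a countable set are countable and for each $L$ there are only countably many finite covers, i.e.\ finite-index subgroups of the finitely generated group $\pi_1(X_L)$. Interleaving the enumeration of tasks with the growth of $\mathcal{K}$ resolves the apparent circularity that the branch loci are themselves subsets of the set being built: any fixed finite $L\subset\mathcal{K}$ lies in some $\mathcal{K}_m$, so the task $(L,h)$ becomes available and is processed at a later stage. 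To process $(L,h)$ I choose disjoint embedded knots $J_1,\dots,J_s\subset N$ whose classes generate the finitely generated group $H_1(N)$, push them by general position off the branch locus $h^{-1}(L)$ and off the preimage of the knots placed so far, set $K_t:=h(J_t)$, and adjoin $K_1,\dots,K_s$ to $\mathcal{K}$; then $J_t$ is a component of $h^{-1}(K_t)$, so $h^{-1}(\mathcal{K})$ contains the generating set $\{[J_1],\dots,[J_s]\}$. Since adjoining knots only enlarges each preimage, the generating property of an already processed task is never destroyed, so in the limit (2) holds for every $(L,h)$ simultaneously.

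The main obstacle is the geometric realization inside this step: the image $h(J_t)$ must be an \emph{embedded} knot in $M$ and must be disjoint from the knots already placed. Disjointness is arranged by first making $J_t$ disjoint from the (finite) preimage link of the current $\mathcal{K}_n$, so that $h(J_t)$ misses $\mathcal{K}_n$; two $1$-manifolds are generically disjoint in a $3$-manifold, so this costs only an isotopy and preserves $[J_t]$. Embeddedness is the delicate point, because $h|_{J_t}$ is merely a local diffeomorphism and $h(J_t)$ is a priori only immersed; however, self-intersections of a curve in a $3$-manifold have negative expected dimension, so the finitely many transverse double points of $h(J_t)$ can be removed by small isotopies of $J_t$ supported near single points (moving one crossing strand of $h(J_t)$ while fixing the other), again without changing $[J_t]$. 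Once embeddedness and disjointness are secured the induction goes through, and checking that the resulting $\mathcal{K}$ satisfies (1)--(3) together with (A) is then routine.
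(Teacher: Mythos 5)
Your proposal is correct and rests on the same core idea as the paper's proof: there are only countably many finite branched covers to worry about, so for each one you pick finitely many knots upstairs generating $H_1(N)$, push them down to $M$, and adjoin their images to $\mathcal{K}$, using general position to keep everything embedded and pairwise disjoint. The organizational difference is how the circularity (the branch loci in condition (A) are subsets of the very set being constructed) is resolved: the paper avoids it by letting $L$ range over \emph{all} finite links in $M$ (countably many up to isotopy) and forming $\mathcal{K}$ in one shot as the union of all the pushed-down generators, whereas you restrict attention to tasks $(L,h)$ with $L\subset\mathcal{K}$ and break the circle by a recursive dovetailing construction, processing each task once its $L$ has appeared in some finite stage $\mathcal{K}_m$. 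Your version is somewhat more careful on two points the paper compresses into ``by moving knots a little bit if necessary'': the monotonicity argument showing that later stages cannot destroy an already-processed task, and the explicit dimension count showing that $h(J_t)$, a priori only immersed, can be made embedded and disjoint from the knots already placed without changing $[J_t]$. Both routes prove the lemma; the paper's is shorter, yours makes the well-definedness of the enumeration and the general-position steps explicit.
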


\begin{proof}
We note firstly that there are only countably many isotopy classes of links in $M$ 
and that there are only countably many closed 3-manifolds.
Let $\mathcal{L}$ be the set of finite subsets of knots in $M$.
For each $L \in \mathcal{L}$, there are only countably many covers of $M$ branched over $L$,
say $\mathcal{C}_L=\{\, h_L:N_L \to M \,\}$. 
Choose a finite set of knots in $N_L$, $\{\, K_{N_L}^\mu \  |\  1\leq \mu \leq m_{N_L} \,\}$, 
such that their homology classes $[K_{N_L}^\mu]$ generate $H_1(N_L)$.
We then define $\mathcal{K}$ to be the union of $h(K_{N_L}^\mu)$ over all $N_L \in \mathcal{C}_L$ and $L\in \mathcal{L}$:
$$
\mathcal{K}:=\{\, h(K_{N_L}^\mu) \,|\, [K_{N_L}^\mu]\text{'s generate } H_1(N_L),\ N_L\in\mathcal{C}_L,\  L\in \mathcal{L} \,\}.
$$

Here, by moving knots $K_{N_L}^\mu$ a little bit if necessary, we may assume that if $K_{N_L}^\mu$ and $K_{N'_{L'}}^\nu$ are distinct,
$K_{N_L}^\mu \cap K_{N'_{L'}}^\nu$ and $h(K_{N_L}^\mu) \cap h(K_{N'_{L'}}^\nu)$ are empty and 
that any $K_{N_L}^\mu$ does not intersect a branch set of $N_L$.
Then the conditon (A) is satisfied by our construction.
\end{proof}

\begin{defn}[very admissible set] %%%%%%%%%%%%%%%%%%%%%%  very admissible set definition
We call an admissible set $\mathcal{K}$ of knots in $M$ {\it very admissible} if $\mathcal{K}$ satisfies the condition in Lemma 5.4.
\end{defn}

Hereafter we fix such a good admissible set $\mathcal{K}$ of knots once and for all. 
So we assume that $h^{-1}(\mathcal{K})$ is admissible set of Abelian covering space $N$, which is branched over $L\subset \mathcal{K}$.
We set $\mathcal{K}_N :=h^{-1}(\mathcal{K})$. 
And its order is induced by $\mathcal{K}$, namely if $L_\alpha \leq L_\beta$ in $\mathcal{K}$, then $h^{-1}(L_\alpha) \leq h^{-1}(L_\beta)$ in $\mathcal{K}_N$. \\

Next, we define the norm map $h_{N/M}:I_{(N;\mathcal{K}_N)} \rightarrow I_{(M;\mathcal{K})}$.
Let $h^{-1}(K)=K_1 \cup K_2 \cup \cdots \cup K_r$ for each $K \in \mathcal{K}$.
For a tubular neighborhood $V_K$ of $K$, let $V_{K_i}$ be a connected component of $h^{-1}(V_K)$ containing $K_i$.
Let $h_{i_*}:H_1(\partial V_{K_i}) \rightarrow H_1(\partial V_K)$ be the homomorphism which is induced by $h_i:=h|_{\partial V_{K_i}}:\partial V_{K_i} \rightarrow \partial V_K$.
We then define $h_K:\displaystyle \bigoplus_{i=1}^{r} H_1(\partial V_{K_i}) \rightarrow H_1(\partial V_K)$ by 
$h_K((a_i)_{i=1}^{r}):=\sum_{i=1}^r h_{i_*}(a_{i})$ and $h_{N/M}:I_{(N;\mathcal{K}_N)} \rightarrow I_{(M;\mathcal{K})}$ is defined by $h_{N/M}:=\sum_{K\in \mathcal{K}}h_K$. 
The norm map $h_{N/M}$ induces the homomorphism $\bar{h}_{N/M}:C_{(N;\mathcal{K}_N)} \rightarrow C_{(M;\mathcal{K})}$. 
We will write $\bar{h}_{N/M}$ simply $h_{N/M}$ when no confusion can arise.

\begin{rem}
We note that $h_{N/M}:C_{(N;\mathcal{K}_N)} \rightarrow C_{(M;\mathcal{K})}$ is well defined, because following diagram is commutative:
\[\xymatrix{
I_{(N;\mathcal{K}_N)} \ar[d] \ar[r]^(0.4){\rho_N} \ar@{}[dr]|\circlearrowleft & \mathrm{Gal}(N;\mathcal{K}_N)^{\mathrm{ab}} \ar[d] \\
I_{(M;\mathcal{K})} \ar[r]_(0.4){\rho_M} & \mathrm{Gal}(M;\mathcal{K})^{\mathrm{ab}}. \\
}\]　 
\end{rem}

%%%%%%%%%%%%%%%%%%%%%%%%%%%%%%%%%%%%%%%%%%%%%%%%%%%%%%%%%%%%%%%%%%%%%%%%%%%%%%%%%
Next, we show an analogue of the relation between id\`ele class group and ideal class group in our 3-manifold context.
We define the following homomorphism,
$$
\xi:  I_{(M;\mathcal{K})} \to \displaystyle \bigoplus _{K \in \mathcal{K}}\mathbb{Z}K 
$$
by
$$
   \xi((a_K)_K) := (v_K(a_K))_K.
$$
Then, we denote Ker$(\xi)$ by $U$.

\begin{prop}
Assume that a very admissible set $\mathcal{K}$ satisfies the following condition: 
For any finite subset $\{K_j\}_{j\in J}$ of $\mathcal{K}$, 
if one has $\displaystyle \sum_{j\in J}c_j[K_j]=0$ in $H_1(M)$ with $c_j \in \mathbb{Z}\setminus \{0\}$, then $[K_j]=0$. 
Then we have 
$$H_1(M) \cong I_{(M;\mathcal{K})}/ (U+P_{(M;\mathcal{K})}).$$
\end{prop}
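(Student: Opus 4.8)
The plan is to realize the claimed isomorphism through a single surjection
$$\Phi : I_{(M;\mathcal{K})} \longrightarrow H_1(M), \qquad \Phi((a_K)_K) := \sum_{K\in\mathcal{K}} v_K(a_K)\,[K],$$
and to identify its kernel with $U + P_{(M;\mathcal{K})}$. The sum is finite since $v_K(a_K)=0$ for almost all $K$, so $\Phi$ is a well-defined homomorphism, and it is surjective because, by admissibility condition (2), the classes $[K]$ with $K\in\mathcal{K}$ generate $H_1(M)$. Note that $\Phi$ factors as $\mathrm{cl}\circ\xi$, where $\mathrm{cl}:\bigoplus_{K}\mathbb{Z}K\to H_1(M)$ sends $K\mapsto [K]$; since $\xi$ is surjective with kernel $U$, one has $I_{(M;\mathcal{K})}/U\cong\bigoplus_K\mathbb{Z}K$, the 3-manifold analogue of the map $\varphi$ of (2.3). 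The theorem then reduces to the equality $\ker\Phi = U + P_{(M;\mathcal{K})}$, after which $H_1(M)\cong I_{(M;\mathcal{K})}/\ker\Phi$ follows at once.

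For $U+P_{(M;\mathcal{K})}\subseteq\ker\Phi$ the inclusion $U=\ker\xi\subseteq\ker\Phi$ is immediate. For $P_{(M;\mathcal{K})}$ I would use the homological fact that under $\partial V_K\hookrightarrow M$ the meridian $[m_K]$ dies (it bounds a disk in $V_K$) while the longitude maps to $[K]$; hence the composite $H_1(\partial V_K)\xrightarrow{\iota_{K*}^\alpha}H_1(X_{L_\alpha})\to H_1(M)$ sends $a_K$ to $v_K(a_K)[K]$. Pushing the defining relation $\sum_K\iota_{K*}^\alpha(a_K)=0$ of a principal idèle forward to $H_1(M)$ then gives $\Phi((a_K)_K)=0$, so $P_{(M;\mathcal{K})}\subseteq\ker\Phi$.

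The substance is the reverse inclusion $\ker\Phi\subseteq U+P_{(M;\mathcal{K})}$. Given $(a_K)_K\in\ker\Phi$, set $c_K:=v_K(a_K)$ and $J:=\{K:c_K\neq 0\}$, a finite set with $\sum_{K\in J}c_K[K]=0$ in $H_1(M)$ and all $c_K\neq 0$; the hypothesis on $\mathcal{K}$ then forces $[K]=0$ for every $K\in J$. It thus suffices, for each null-homologous $K_0\in J$, to produce a principal idèle $b^{(K_0)}$ whose longitude component is $c_{K_0}$ at $K_0$ and $0$ elsewhere: subtracting $\sum_{K_0\in J}b^{(K_0)}$ from $(a_K)_K$ leaves an idèle all of whose longitude components vanish, i.e. an element of $U$, yielding the decomposition. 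To build $b^{(K_0)}$ I would take a Seifert surface $\Sigma_0$ of $K_0$ (available since $[K_0]=0$), set the $K_0$-entry to $c_{K_0}[l_{K_0}]$, and for $K\neq K_0$ set the entry to $-c_{K_0}\,\mathrm{lk}(K_0,K)[m_K]$; this has finite support because $\Sigma_0$ meets only finitely many $V_K$.

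The main obstacle is verifying that $b^{(K_0)}$ genuinely lies in $P_{(M;\mathcal{K})}$, i.e. that $\sum_K\iota_{K*}^\alpha(b^{(K_0)}_K)=0$ in $H_1(X_{L_\alpha})$ for \emph{every} finite $L_\alpha$ at once. Here I would truncate $\Sigma_0$ along the tori $\partial V_K$ with $K\in L_\alpha$; the resulting surface in $X_{L_\alpha}$ exhibits the relation $\iota_{K_0*}^\alpha([l_{K_0}])=\sum_{K\neq K_0}\mathrm{lk}(K_0,K)\,\iota_{K*}^\alpha([m_K])$, only the finitely many $K\in L_\alpha$ contributing on the right, with coefficients the linking numbers read off as intersections with $\Sigma_0$, while for $K\notin L_\alpha$ one has $\iota_{K*}^\alpha([m_K])=0$. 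Substituting these two facts collapses the sum to $0$, separately in the cases $K_0\in L_\alpha$ and $K_0\notin L_\alpha$ (in the latter $\iota_{K_0*}^\alpha([l_{K_0}])=[K_0]$). The care needed is essentially that this relation holds coherently across the whole directed system $\{L_\alpha\}$ and that the $0$-framed Seifert longitude and orientations are chosen consistently; granting this, $\sum_{K_0\in J}b^{(K_0)}$ is the required principal idèle and the argument closes.
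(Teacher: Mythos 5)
Your proof is correct and follows essentially the same route as the paper's: your $\Phi$ coincides with the paper's $\varphi$ (since $\iota_{K*}^{M}(a_K)=v_K(a_K)[K]$), and your kernel computation --- invoking the hypothesis to get $[K_j]=0$, taking Seifert surfaces, and correcting each longitude component by linking-number multiples of meridians to split off a principal id\`ele plus an element of $U$ --- is the same decomposition the paper uses. If anything, you are more explicit than the paper in verifying that the correction id\`ele genuinely lies in $P_{(M;\mathcal{K})}$ for every finite $L_\alpha$.
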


\begin{proof}
For $K\in \mathcal{K}$, let $\iota_K^M:\partial V_K \to M$ be the inclusion map and $\iota_{K*}^M:H_1(\partial V_K) \to H_1(M)$ the induced homomorphism
$$
\varphi: I_{(M;\mathcal{K})} \rightarrow H_1(M)
$$
by
$$
\varphi((a_K)_K) :=\displaystyle \sum_{K \in \mathcal{K}}\iota_{K*}^M(a_K).
$$
Here the summation over $K \in\mathcal{K}$ is actually a finite sum, 
because $v_K(a_K)=0$ for almost all $K \in\mathcal{K}$ and a meridian of any $K$ is null-homologous in $M$.
It is easy to see that $\varphi$ is surjective, using to the condition (2) of Definition 5.1 of $\mathcal{K}$.
Therefore it suffices to show Ker$(\varphi)=U+P_{(M;\mathcal{K})}$.

\par Suppose $(a_K)_K \in U+P_{(M;\mathcal{K})}$. 
Then we can write $(a_K)_K=(b_K)_K+(c_K)_K$ with $(b_K)_K \in P_{(M;\mathcal{K})}$ and $(c_K)_K \in U$.
By Definition 5.3 of $P_{(M;\mathcal{K})}$, it is easy to see $\varphi((b_{K})_K)=0$, hence $(b_K)_K \in \mathrm{Ker}(\varphi)$.
As for $(c_K)_K$, we also have $\varphi((c_K)_K)=0$, because $v_K(c_K)=0$ for all $K\in \mathcal{K}$ and
a meridian of any $K \in \mathcal{K}$ is null homologous in $M$. 
Therefore we have $(a_K)_K \in \mathrm{Ker}(\varphi)$.

\par Suppose $(a_K)_K \in \mathrm{Ker}(\varphi)$. As in Definition 4.3, we decompose $a_K$ to the meridian and longitude components:
$$
a_K=(q_K,p_K)=q_K[m_K] + p_K[l_K],
$$
where $m_K$ is the meridian of $K$, $l_K$ is a longitude of $K$, and $p_K, q_K \in \mathbb{Z}$.
If $p_K=0$ for all $K\in \mathcal{K}$, then $(a_K)_K\in U$ and we are done.
So we may assume there are some $K$, say $K_1,\ldots,K_n$, such that $p_{K_1},\ldots,p_{K_n}\neq 0$. We write

$$\left\{
\begin{array}{l}
 (a_K)_K=(b_K)_K+(c_K)_K  \\
 (b_K)_K=(\ldots,\bm{0},(0,p_{K_1}),\bm{0},\ldots,\bm{0},(0,p_{K_n}),\bm{0},\ldots) \\
 (c_K)_K\in U,
\end{array}
\right.
$$
where $\bm{0}=(0,0)$. Then it suffices to show $(b_K)_K \in P_{(M;\mathcal{K})}+U$.
Since $\varphi((a_K)_K)=\varphi((c_K)_K)=0$, we have 
$$
0=\varphi((b_K)_K)=\displaystyle \sum_{i=1}^{n}p_{K_i}[K_i].
$$
By our assumption, we have $[K_i]=0$ in $H_1(M)$ for $i=1,\ldots,n.$
Therefore there exists a surface $S$ such that $\partial S = K_1\cup\cdots\cup K_n$.
If there is no $K\in\mathcal{K}\setminus \{ K_1,\ldots,K_n \}$ which intersects with $S$,
then $(b_K)_K \in P_{(M;\mathcal{K})}$.
Suppose there is $\{ K_\mu \} \subset \mathcal{K}$ such that $K_\mu$ intersects with $S$. 
Then let $n_{i\mu}:=\mathrm{lk}(K_i,K_\mu)$ be the linking number of $K_i$ and $K_\mu$ which can be defined by our assumption, and we have
$$
(\ldots,\bm{0},(0,p_{K_i}),\bm{0},\ldots) = (\ldots,\bm{0},(0,p_{K_i}),\bm{0},\ldots,\bm{0},(-p_{K_i}n_{i\mu},0),\bm{0},\ldots) + (\ldots,\bm{0},(p_{K_i}n_{i\mu},0),\bm{0},\ldots).
$$
Since the first term of the right hand side is in $P_{(M;\mathcal{K})}$ and the second one is in $U$, we are done.

%%%%

\end{proof}

\begin{cor}
Let $M$ be an integral homology sphere.
Then $I_{(M;\mathcal{K})} = U+P_{(M;\mathcal{K})}$.
\end{cor}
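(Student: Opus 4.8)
The plan is to deduce this immediately from Proposition 5.7. First I would observe that since $M$ is an integral homology sphere, we have $H_1(M) \cong H_1(S^3) = 0$ by definition.

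Next I would verify that the hypothesis imposed on the very admissible set $\mathcal{K}$ in Proposition 5.7 is automatically satisfied in the present setting. That hypothesis requires that for any finite subset $\{K_j\}_{j\in J}$ of $\mathcal{K}$, if $\sum_{j\in J} c_j [K_j] = 0$ in $H_1(M)$ with $c_j \in \mathbb{Z}\setminus\{0\}$, then $[K_j] = 0$. Since $H_1(M) = 0$, every class $[K_j]$ already vanishes, so the desired conclusion holds vacuously and the condition is met without any further work.

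I would then apply Proposition 5.7, which furnishes the isomorphism $H_1(M) \cong I_{(M;\mathcal{K})}/(U + P_{(M;\mathcal{K})})$. Because $H_1(M) = 0$, the quotient $I_{(M;\mathcal{K})}/(U + P_{(M;\mathcal{K})})$ is trivial, and this is precisely the assertion $I_{(M;\mathcal{K})} = U + P_{(M;\mathcal{K})}$.

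There is essentially no obstacle to overcome here; this statement is a direct specialization of Proposition 5.7 to the case $H_1(M) = 0$. The only point deserving a moment's attention is confirming that the homological hypothesis of Proposition 5.7 is genuinely in force rather than needing an independent verification, but this is immediate from the vanishing of $H_1(M)$ for an integral homology sphere.
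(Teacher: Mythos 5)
Your proposal is correct and follows exactly the paper's own argument: since $H_1(M)=0$, the hypothesis of Proposition 5.7 holds trivially, and the resulting isomorphism forces $I_{(M;\mathcal{K})}/(U+P_{(M;\mathcal{K})})=0$. No further comment is needed.
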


\begin{proof}
Since $H_1(M) = 0$, a very admissible set $\mathcal{K}$ of $M$ satisfies the assumption of Proposition 5.7.
Then $I_{(M;\mathcal{K})}/ (U+P_{(M;\mathcal{K})})=0$ by Proposition 5.7.
\end{proof}

%%%%%%%%%%%%%%%%%%%%%%%%%%%%%%%%%%%%%%%%%%%%%%%%%%%%%%%%%%%%%%%%%%%%%%%%%%%%%%%%%

Finally, we present our main result on global class field theory for integral homology sphere.

	\begin{thm}[Global class field theory over an integral homology sphere]\label{global CFT for mfd}$\\$ %\`
        Let M be an integral homology sphere and let $\mathcal{K}$ be a very admissible set of knots in $M$.
        Then, there exists homomorphism, $$\rho_{M}:C_{(M;\mathcal{K})} \rightarrow \mathrm{Gal}(M;\mathcal{K})^{\mathrm{ab}}$$ which has the following properties:

		$(1)$ For any finite Abelian covering  $h:N \rightarrow M$ branched over $L$, $\rho_M$ induces the isomorphism,
		$$
               		C_{(M;\mathcal{K})}/h_{N/M}(C_{(N;\mathcal{K}_N)})\cong \mathrm{Gal}(N/M)
		$$
                where $h_{N/M}$ denotes the norm map on the id\`ele class group.

		$(2)$
                For a knot $K \in \mathcal{K}$, we have the following commutative diagram:
                \[\xymatrix{
			H_1(\partial V_K) \ar[d]_{\iota_K} \ar[r]^(0.4){\rho_K} \ar@{}[dr]|\circlearrowleft & \mathrm{Gal}(\partial V_K^{\mathrm{ab}}/\partial V_K) \ar[d]^{g_K} \\
			C_{(M;\mathcal{K})} \ar[r]_(0.4){\rho_M} 							& \displaystyle \mathrm{Gal}(M;\mathcal{K})^{\mathrm{ab}}\\
		}\]
                where $\iota_K$ is the homomorphism induced by the natural inclusion $H_1(\partial V_K) \rightarrow I_{(M;\mathcal{K})}$. Namely, $\iota_K(a_K)=[(\ldots,\bm{0},a_K,\bm{0},\ldots)]$, 
                and $g_K$ is the homomorphism induced by $g_K^\alpha$ in (5.2).
	\end{thm}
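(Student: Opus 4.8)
The plan is to take for $\rho_M$ the map already constructed on $I_{(M;\mathcal{K})}$, which descends to $C_{(M;\mathcal{K})}$ once one checks $P_{(M;\mathcal{K})}\subseteq\mathrm{Ker}(\rho_M)$; this is immediate, since for $(a_K)_K\in P_{(M;\mathcal{K})}$ each $\psi_\alpha((a_K)_K)=\eta_{X_{L_\alpha}}(\sum_K\iota_{K*}^\alpha(a_K))=0$ by Definition~5.3. Property~(2) is then just a reading of the construction: on the single-slot id\`ele $\iota_K(a_K)=(\dots,\bm{0},a_K,\bm{0},\dots)$ one has $\rho_M(\iota_K(a_K))=(\lambda_K^\alpha(a_K))_\alpha=(g_K^\alpha\circ\rho_K(a_K))_\alpha=g_K(\rho_K(a_K))$ by the commutative square (5.2), so the diagram commutes. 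For (1) I would set $\rho_{N/M}:=\mathrm{pr}\circ\rho_M$, where $\mathrm{pr}\colon\mathrm{Gal}(M;\mathcal{K})^{\mathrm{ab}}\to\mathrm{Gal}(N/M)$ is the projection onto the factor $\mathrm{Gal}(X_L^{\mathrm{ab}}/X_L)=H_1(X_L)$ followed by the surjection $\pi\colon H_1(X_L)\twoheadrightarrow\mathrm{Gal}(N/M)$ classifying the Abelian cover $Y_L\to X_L$ (this surjection exists because $h$ is Abelian). The assertion (1) is then equivalent, after dividing by $P_{(M;\mathcal{K})}$, to the single identity
\[
\mathrm{Ker}\bigl(\rho_{N/M}\colon I_{(M;\mathcal{K})}\to\mathrm{Gal}(N/M)\bigr)=P_{(M;\mathcal{K})}+h_{N/M}(I_{(N;\mathcal{K}_N)})
\]
together with the surjectivity of $\rho_{N/M}$.

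The surjectivity and the inclusion $\supseteq$ are the easy half. Since $M$ is a homology sphere, the meridians $[m_K]$ with $K\in L$ generate $H_1(X_L)$; each is $\iota_{K*}^L$ of a meridian id\`ele and $\pi$ is onto, so $\rho_{N/M}$ is surjective. The inclusion $P_{(M;\mathcal{K})}\subseteq\mathrm{Ker}$ was noted above. For the norms I would use the commuting square of inclusions $\iota_K^{X_L}\circ h_i=h\circ\iota_{K'}^{Y_L}$, giving $\iota_{K*}^L\circ h_{i*}=h_*\circ\iota_{K'*}$; hence the $H_1(X_L)$-image of any $h_{N/M}(b)$ lands in $\mathrm{Im}(h_*\colon H_1(Y_L)\to H_1(X_L))=\mathrm{Ker}(\pi)=:R$, so $\rho_{N/M}\circ h_{N/M}=0$.

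The content is the inclusion $\subseteq$. By Corollary~5.8 write any $a\in\mathrm{Ker}(\rho_{N/M})$ as $a=u+p$ with $u=(q_K[m_K])_K\in U$ and $p\in P_{(M;\mathcal{K})}$; as $p$ is already in the kernel it suffices to place $u$ in $P_{(M;\mathcal{K})}+h_{N/M}(I_{(N;\mathcal{K}_N)})$. Because $\iota_{K*}^L([m_K])=0$ for $K\notin L$ while $[m_K]$ is a generator for $K\in L$, the condition $\rho_{N/M}(u)=0$ says exactly $\sum_{K\in L}q_K[m_K]\in R$. I would then describe $R$ by generators through the Hilbert theory of Section~3: $H_1(Y_L)$ is generated by the meridians $[m_{K'}]$ of the branch components $K'$ over $K\in L$ together with classes $[K']$ of knots $K'\in\mathcal{K}_N$ disjoint from $h^{-1}(L)$ that represent $H_1(N)$ (available by very-admissibility), and one has $h_{i*}[m_{K'}]=e_K[m_K]$ with $e_K=\#I_K$ the ramification index, while $h_*[K']$ is a linking-number combination $\sum_{K\in L}(\ast)[m_K]$. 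Thus $R$ is generated by the inertia elements $e_K[m_K]$ and these linking-number classes. The final move is to realize $u$, up to $P_{(M;\mathcal{K})}$, as $h_{N/M}$ of an id\`ele over $N$ assembled from these same generators: meridian id\`eles at the $K'$ over $L$ furnish the $e_K[m_K]$ part, and longitude id\`eles at the chosen $K'$ furnish the linking-number part.

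I expect this last realization to be the main obstacle. The norm of a longitude id\`ele over $N$ produces, besides its meridian contribution in $H_1(X_L)$, nonzero longitude components downstairs that $u$ does not have; to finish one must show the difference $u-h_{N/M}(b)$ lies in $P_{(M;\mathcal{K})}$, i.e.\ maps to $0$ in $H_1(X_{L_\alpha})$ for every finite $L_\alpha\subset\mathcal{K}$, not merely for $L_\alpha=L$. Controlling all these longitude discrepancies at once---concretely, producing a single bounding surface downstairs as in the proof of Proposition~5.7---is exactly where very-admissibility of $\mathcal{K}$ and the homology-sphere hypothesis on $M$ enter in full force, and is the delicate point of the argument.
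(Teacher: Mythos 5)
Your architecture matches the paper's: $\rho_M$ descends to $C_{(M;\mathcal{K})}$ because every $\psi_\alpha$ kills $P_{(M;\mathcal{K})}$ by Definition 5.3; part (2) is the one-line computation $\rho_M(\iota_K(a_K))=(\lambda_K^\alpha(a_K))_\alpha=g_K(\rho_K(a_K))$ from (5.2); and for part (1) you reduce, via Corollary 5.8 and
$C_{(M;\mathcal{K})}/h_{N/M}(C_{(N;\mathcal{K}_N)})\cong U/\bigl(U\cap(P_{(M;\mathcal{K})}+h_{N/M}(I_{(N;\mathcal{K}_N)}))\bigr)$,
to identifying the kernel of the induced map $U\to H_1(X_L)/h_*(H_1(Y_L))\cong\mathrm{Gal}(N/M)$. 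The surjectivity (meridians of $L$ generate $H_1(X_L)$ since $M$ is a homology sphere) and the inclusion $\supseteq$ are argued correctly and exactly as in the paper. But the decisive inclusion $\subseteq$ --- that every $u=(q_K[m_K])_K\in U$ with $\sum_{K\in L}q_K[m_K]\in h_*(H_1(Y_L))$ lies in $P_{(M;\mathcal{K})}+h_{N/M}(I_{(N;\mathcal{K}_N)})$ --- is not proved. You lay out generators of $h_*(H_1(Y_L))$ and a plan to realize $u$ as a norm of meridian and longitude id\`eles upstairs corrected by principal id\`eles, and then you explicitly declare the control of the resulting longitude discrepancies to be an unresolved ``delicate point.'' That step is the entire content of the theorem; flagging it as an obstacle rather than closing it leaves a genuine gap.

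For comparison, the paper's own treatment of this inclusion is shorter and componentwise: since each $a_K$ has zero longitude component, for unbranched $K$ one has $a_K\in\mathrm{Im}(h_K)$ automatically (the local norm image contains $\mathbb{Z}[m_K]\times 0$), and for the branched components $K_i$ the paper deduces $\iota_{K_i}^L(a_{K_i})\in h_*(H_1(Y_L))$ from the condition on the sum, concludes $a_{K_i}\in\mathrm{Im}(h_{K_i})$, and so places $u$ directly in $U\cap h_{N/M}(I_{(N;\mathcal{K}_N)})$ with no principal-id\`ele correction, no bounding surface, and no linking-number computation. (The linking-number bookkeeping you anticipate is what the paper deploys only later, in the explicit Example 5.10 over $S^3$.) So even granting your reduction, you have neither reproduced the paper's componentwise argument nor completed your own alternative; to finish along your route you would have to actually exhibit, for each generator of $h_*(H_1(Y_L))$ coming from a knot $K'\in\mathcal{K}_N$, the id\`ele $b$ upstairs and the principal id\`ele witnessing $u-h_{N/M}(b)\in P_{(M;\mathcal{K})}$, which is precisely the surface argument of Proposition 5.7 that you invoke by name but do not carry out.
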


	\begin{proof}
We note that a very admissible set $\mathcal{K}$ exists by lemma 5.4.
Let $M\setminus L =X_L$, $N\setminus h^{-1}(L)=Y_L$, $L=K_1 \cup \cdots \cup K_r$. 
%Then we denote by $m_i$ a meridian of $K_i$.
Since $h:N \rightarrow M$ is Abelian covering branched over $L$, $\mathrm{Gal}(N/M) \cong H_1(X_L)/h_*(Y_L)$.

We define $\rho_M$ as the canonical induced homomorphism by (5.1). 
We note that this is well-defined by definition of $P_{(M;\mathcal{K})}$.

(1) We have 
\begin{eqnarray*}C_{(M;\mathcal{K})}/ h_{N/M}(C_{(N;\mathcal{K}_N)}) &=& (I_{(M;\mathcal{K})}/P_{(M;\mathcal{K})}) / h_{N/M}(I_{(N;\mathcal{K}_N)}/P_{(N;\mathcal{K}_N)}) \\
 &=& I_{(M;\mathcal{K})} / (P_{(M;\mathcal{K})} + h_{N/M}(I_{(N;\mathcal{K}_N)})) \\
 &=& (P_{(M;\mathcal{K})} + U) / (P_{(M;\mathcal{K})} + h_{N/M}(I_{(N;\mathcal{K}_N)})) \ \ \ \ \ \ \ \ \ \ \text{by Corollary 5.8}\\
 &=& U/ U \cap (P_{(M;\mathcal{K})} + h_{N/M}(I_{(N;\mathcal{K}_N)})). \end{eqnarray*}
 
Therefore, we need to prove $U/ U \cap (P_{(M;\mathcal{K})} + h_{N/M}(I_{(N;\mathcal{K}_N)})) \cong \mathrm{Gal}(N/M) \cong H_1(X_L)/h_*(Y_L)$.

Let us define the homomorphism $\varphi: U \to H_1(X_L)/h_*(Y_L)$ by $\varphi((a_K)_K):= \displaystyle \pi \circ \sum_{K\in \mathcal{K}} \iota_K^L(a_K)$, 
where the homomorphism $\iota_K^L:H_1(T_K) \to H_1(X_L)$ is induced by the natural inclusion $T_K \to X_L$, and $\pi:H_1(X_L) \to H_1(X_L)/h_*(Y_L)$ is the natural projection.

Since $M$ is an integral homology sphere, $H_1(X_L)$ is generated by the meridian classes of $K_i$'s. 
Hence $\varphi$ is surjective by the definition. 

Therefore, it suffices to prove $\mathrm{Ker}(\varphi) = U \cap (P_{(M;\mathcal{K})}+ h_{N/M}(I_{(N;\mathcal{K}_N)})).$

Let $(a_K)_K$ be an element of Ker$(\varphi)$, namely $\sum_{K\in \mathcal{K}} \iota_K^L(a_K) \in h_*(Y_L)$. 
We note that the longitude component of $a_K$ is 0 for each $K \in \mathcal{K}$. 
For each component $K \in \mathcal{K}$, we consider the homomorphism $h_K:\displaystyle \bigoplus_{\tilde{K}\in \pi_0(h^{-1}(K))} H_1(T_{\tilde{K}}) \to H_1(T_K)$.
If $K$ is unbranched component, the definition of unbranch covering implies Im$(h_K) \supset \mathbb{Z}[m] \times 0$.
%Therefore we check the meridian component of branched component $K_i$ coming from $h_K$.
Therefore it suffices to check that Im$(h_{K_i})\supset \mathbb{Z}[m_i] \times 0$.
Since $H_1(X_L)$ is generated by the meridian classes of $K_i$'s, and the longitude component of $a_{K_i}$ is 0, 
$\sum_{K\in \mathcal{K}} \iota_K^L(a_K) \in h_*(Y_L)$ implies $\iota_{K_i}^L(a_{K_i}) \in h_*(Y_L)$.
It implies $a_{K_i}$ is an element of Im$(h_{K_i})$. 
Therefore $(a_K)_K$ is an element of $U \cap h_{N/M}(I_{(N;\mathcal{K}_N)})$.

Let $(a_K)_K + (b_K)_K$ be an element of $U \cap (P_{(M;\mathcal{K})} + h_{N/M}(I_{(N;\mathcal{K}_N)}))$, 
where $(a_K)_K$ is an element of $P_{(M;\mathcal{K})}$, and $(b_K)_K$ is an element of $h_{N/M}(I_{(N;\mathcal{K}_N)}))$. 
By the definition of $P_{(M;\mathcal{K})}$, $\sum_{K\in \mathcal{K}} \iota_K^L(a_K) =0$. 
Then $\varphi((b_K)_K)=0$ by the definition of norm map $h_{N/M}$. 
Hence $(a_K)_K + (b_K)_K \in \text{Ker}(\varphi)$. 

Thus we obtain $\mathrm{Ker}(\varphi) = U \cap (P_{(M;\mathcal{K})}+ h_{N/M}(I_{(N;\mathcal{K}_N)}))$.

(2) Our assertion follows from 

\begin{eqnarray*}
 g \circ \rho_K(a,b)	&=& (\iota_{K_*}^{\alpha}(a,b))_{L_{\alpha}}
\end{eqnarray*}
and
\begin{eqnarray*}
\rho_M \circ \iota_K (a,b) &=&\rho_M[(0, \ldots , 0 , (a,b) , 0 , \ldots )] \\
&=&(\sum_{K\in\mathcal{K}} \iota_{K_*}^\alpha ((a_K)_K))_{L_\alpha} \\
&=&(\iota_{K_*}^\alpha (a,b))_{L_\alpha}.
\end{eqnarray*}
	\end{proof}

\begin{eg}
Let $M=S^3$, we choose a link $L=K_1 \cup \cdots \cup K_r \in \mathcal{L}$. Let $X_L:= M\setminus L$, and let $m_i$ be a meridian class of $K_i$. 
The map sending each meridian class $m_i$ to 1 defines a surjective homomorphism $\psi: \pi_1(X_L) \to \mathbb{Z}/n_1\mathbb{Z} \times \cdots \times \mathbb{Z}/n_r\mathbb{Z} \ ;\  m_i \mapsto 1 \in \mathbb{Z}/n_i\mathbb{Z}$. 
For an Abelian covering of $X_L$ corresponding to Ker$(\psi)$, we have the Fox completion $N$, which is an Abelian covering of $S^3$ branched over $L$.
Then, the Galois group Gal$(N/S^3) \cong \mathbb{Z}/n_1\mathbb{Z} \times \cdots \times \mathbb{Z}/n_r\mathbb{Z}$.
We are going to show that $C_{(S^3;\mathcal{K})}/h_{N/S^3}(C_{(N;\mathcal{K}_{N})})\cong \mathbb{Z}/n_1\mathbb{Z} \times \cdots \times \mathbb{Z}/n_r\mathbb{Z}$. 
By the above proof, it is sufficiant to show that $U/ U \cap (P_{(S^3;\mathcal{K})} + h_{N/S^3}(I_{(N;\mathcal{K}_{N})}) \cong \mathbb{Z}/n_1\mathbb{Z} \times \cdots \times \mathbb{Z}/n_r\mathbb{Z}$. 

%Let $\varphi:U \to \mathbb{Z}/n\mathbb{Z}$ be a homomorphism sending the meridian component of $K$ to 1, and other meridian component to 0. 
%This homomorphism is surjective, so we show Ker$(\varphi)\cong  U \cap (P_{(S^3;\mathcal{K})} + h_{N/S^3}(I_{(N_n,\mathcal{K}_{N})})$. 

We define the homomorphism $\varphi: U \rightarrow \mathbb{Z}/n_1\mathbb{Z} \times \cdots \times \mathbb{Z}/n_r\mathbb{Z}$ by
$\varphi((a_K)_K) := (m_{K_1}, \cdots ,m_{K_r})$, where $m_{K_i} \in \mathbb{Z}/n_i\mathbb{Z}$ be the meridian component of $H_1(T_{K_i})$.

$\varphi$ is surjective by the definition, so we prove $\mathrm{Ker}(\varphi) = U \cap (P_{(S^3;\mathcal{K})}+ h(I_{(N;\mathcal{K}_N)})).$

Let $(a_K)_K$ be an element of Ker$(\varphi)$. 
Assume $K$ is an unbranched component of $\mathcal{K}$, 
the meridian component of $a_K \in H_1(T_K)$ is coming from a meridian component of some element of $H_1(T_{h^{-1}(K)})$. 
%%%
Therefore we move on the branched component $K_i$.
%%%
We denote by $K_i'$ one of the connected component of $h^{-1}(K_i)$. 
The meridian component of $m_{K_i}$ is multiples of $n_i$, 
so $a_{K_i} \in H_1(T_{K_i})$ is coming from $H_1(T_{K_i'})$ by the fact that $K_i$ has the branched index $n_i$.
Therefore $(a_K)_K \in U \cap (0 + h(I_{(N;\mathcal{K}_N)}))$.

Let $(a_K)_K$ be an element of $U \cap (P_{(S^3;\mathcal{K})} + h(I_{(N;\mathcal{K}_N)}))$, 
$(a_K)_K$ be the following form 
$$
(a_K)_K = (\ldots,(m_K,l_K),\ldots) + (\ldots,(m_K',-l_K),\ldots).
$$
The first term element is in $P_{(S^3;\mathcal{K})}$, and the second term element is in $h(I_{(N;\mathcal{K}_N)})$. 
Then we set first term by $(b_K)_K$, second term by $(c_K)_K.$
$m_{K_i}'$ is coming from the norm map, so $m_{K_i}$ is multiples of $n_i$.
Therefore we consider $m_{K_i}$.
We define the subset $I := \{\, i \in \{\, 1,2,\ldots,n \,\} \,|\, m_{K_i}\neq 0 \,\}$.

For each $i \in I$, there exist a knot $K_{i\mu}\in \mathcal{K}$, 
such that $m_{K_i}=n_{i\mu}x_{i\mu}$ where $n_{i\mu}:=\mathrm{lk}(K_i,K_{i\mu})$.

i.e. $$(b_K)_K = (\ldots,(x_{i\mu}n_{i\mu},l_{K_i}),\ldots,(-l_{K_i}n_{i\mu},-x_{i\mu}),\ldots,)$$

where $x_{i\mu} \in \mathbb{Z}$.
The form of $(b_K)_K$ and the fact that $(a_K)_K=(b_K)_K+(c_K)_K \in U$ imply the form of $(c_K)_K$, namely
$$
(c_K)_K = (\ldots,( * ,-l_{K_i}),\ldots,(* ,x_{i\mu}),\ldots).
$$

Next, we consider $x_{i\mu}$.
We denote by $K_{i\mu}'$ a connected component of $h^{-1}(K_{i\mu})$. 
Then $f_{i\mu}$ is the covering degree of $K_{i\mu}'$ over $K_{i\mu}$. 
By the Proposition 3.2, $f_{i\mu}$ is the order of the element 
$(\mathrm{lk}(K_{i\mu},K_1),\ldots, \mathrm{lk}(K_{i\mu},K_r)) \in \mathbb{Z}/n_1\mathbb{Z} \times \cdots \times \mathbb{Z}/n_r\mathbb{Z}$.

Hence $f_{i\mu}$ is the multiples of $n_i/d_i$, where $d_i$ is the greatest common divisor of $n_{i\mu}$ and $n_i$.
Therefore $x_{i\mu}$ is the multiple of $n_i/d_i$.
It implies $m_{K_i}= n_{i\mu}x_{i\mu} = n_{i\mu}\frac{n_i}{d_i}y = \frac{n_{i\mu}}{d_i}n_iy \in n_i\mathbb{Z}$.

Since $m_{K_i}$ is the multiple of $n_i$ for each $K_i$, $(a_K)_K$ is an element of kernel. 
Thus we obtain the isomorphism $C_{(S^3;\mathcal{K})}/h_{N/S^3}(C_{(N;\mathcal{K}_{N})})\cong \mathbb{Z}/n_1\mathbb{Z} \times \cdots \times \mathbb{Z}/n_r\mathbb{Z}$.

\end{eg}

At last, we are going to introduce an analogu of Proposition \ref{cor of global CFT}.

\begin{prop}
For a finite Abelian covering $h:N \to M $ branched over $L \in \mathcal{L}$,\\
let $\rho_{N/M}: C_M \rightarrow \mathrm{Gal}(N/M)$ be the homomorphism defined by composing $\rho_M$ with the natural projection 
$\mathrm{Gal}(M;\mathcal{K})^{\mathrm{ab}} \to \mathrm{Gal}(N/M)$. 
Then we have 

$(1)$ $K \in \mathcal{K}$ is completely decomposed in $N$ if and only if $\rho_{N/M} \circ \iota_K(H_1(\partial V_K))= \{1\}$, 

$(2)$ $K \in \mathcal{K}$ is unbranched in $N$ if and only if $\rho_{N/M} \circ \iota_K(\mathbb{Z}[m])= \{1\}$.

\end{prop}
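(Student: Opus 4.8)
The plan is to transport the proof of Proposition \ref{cor of global CFT} to the topological setting, using the local--global compatibility of Theorem \ref{global CFT for mfd}(2) together with the Hilbert theory of Section 3. Throughout, write $p:\mathrm{Gal}(M;\mathcal{K})^{\mathrm{ab}}\to\mathrm{Gal}(N/M)$ for the natural projection, so that $\rho_{N/M}=p\circ\rho_M$, and recall from the proof of Theorem \ref{global CFT for mfd} the identification $\mathrm{Gal}(N/M)\cong H_1(X_L)/h_*(H_1(Y_L))$ for the Abelian covering $h$.

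First I would record that the commutative square in Theorem \ref{global CFT for mfd}(2) gives $\rho_{N/M}\circ\iota_K=p\circ g_K\circ\rho_K$. Since $\rho_K$ is an isomorphism by Theorem \ref{local CFT for tori}, the image $\rho_{N/M}\circ\iota_K(H_1(\partial V_K))$ coincides with the image of the composite $H_1(\partial V_K)\xrightarrow{\iota_{K*}^L}H_1(X_L)\to H_1(X_L)/h_*(H_1(Y_L))\cong\mathrm{Gal}(N/M)$, and likewise the image of $\mathbb{Z}[m]$ is computed through this same composite.

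The heart of the argument is to identify these two images with the Hilbert-theoretic subgroups of Section 3: I would show $\rho_{N/M}\circ\iota_K(H_1(\partial V_K))=D_K$ and $\rho_{N/M}\circ\iota_K(\mathbb{Z}[m])=I_K$. For the first, covering-space theory applied to the inclusion $\partial V_K\hookrightarrow X_L$ shows that the image of $\pi_1(\partial V_K)=H_1(\partial V_K)$ in the (Abelian) Galois group is exactly the stabilizer of a component $\partial V_{K_i}$ of the preimage, hence the stabilizer of $K_i$, i.e.\ the decomposition group $D_{K_i}=D_K$; this is compatible with the identification $D_{K_i}\cong\mathrm{Gal}(\partial V_{K_i}/\partial V_K)$ of Section 3. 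For the second, Theorem \ref{local CFT for tori}(2) identifies $\mathbb{Z}[m]$ with the ramified part $\mathrm{Gal}(\partial V_K^{\mathrm{ab}}/\partial V_K^{\mathrm{ur}})$, so its image is generated by the meridian class: when $K$ is disjoint from $L$ the meridian bounds a disk inside $V_K\subset X_L$ (disjointness from $L$ follows from condition (1) of Definition 5.1) and hence maps to $0$, whereas when $K$ is a branch component its image generates the inertia group $I_K$ appearing in the sequence $1\to I_K\to D_K\to\mathrm{Gal}(K_i/K)\to1$ of Theorem 3.1.

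Finally I would conclude from the numerical relation $n=efr$ of Theorem 3.1. By the description in Section 3, $K$ is completely decomposed in $N$ (that is, $r=n$) precisely when $ef=\#D_K=1$, i.e.\ $D_K=\{1\}$; combined with the first identification this proves (1). Likewise $K$ is unbranched precisely when $e=\#I_K=1$, i.e.\ $I_K=\{1\}$, proving (2). The main obstacle is the identification step: one must carefully match the split sequence $0\to\mathbb{Z}[m]\to H_1(\partial V_K)\to\mathbb{Z}[l]\to0$ of (4.1) with the Hilbert sequence $1\to I_K\to D_K\to\mathrm{Gal}(K_i/K)\to1$, verifying that the meridian subgroup cuts out the inertia group and that the whole boundary homology surjects onto the decomposition group.
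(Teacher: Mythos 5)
Your argument is correct and follows essentially the same route as the paper: both reduce, via the compatibility square of Theorem \ref{global CFT for mfd}(2), to identifying $\rho_{N/M}\circ\iota_K(H_1(\partial V_K))$ with the decomposition group $D_K\cong\mathrm{Gal}(\partial V_{K'}/\partial V_K)$ (using surjectivity from Theorem \ref{local CFT for tori}) and the image of $\mathbb{Z}[m]$ with the inertia data. The only cosmetic difference is in (2), where the paper cites Corollary \ref{cor of local CFT for tori} (subgroups containing $\mathbb{Z}[m]$ correspond to unbranched coverings of $V_K$) while you phrase the same fact as ``image of $\mathbb{Z}[m]$ equals $I_K$'' and invoke $n=efr$.
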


\begin{proof}
(1) Let $K'$ be a connected component of $h^{-1}(K)$. 
By Section 3, we identify with Gal$(\partial V_{K'}/\partial V_K)$ and decomposition group of $K'$. 
By Theorem \ref{global CFT for mfd}, the following diagram is commutative:

\[\xymatrix{
H_1(\partial V_K) \ar[d]_{\rho_{N/M} \circ \iota_K} \ar[r] \ar@{}[dr]|\circlearrowleft & \mathrm{Gal}(\partial V_{K'}/\partial V_K) \ar[d]^{\cap} \\
\mathrm{Gal}(N/M) \ar@{=}[r] & \mathrm{Gal}(N/M) .\\
}\]
Here, $H_1(\partial V_K) \to \mathrm{Gal}(\partial V_{K'}/\partial V_K)$ is surjective by Theorem \ref{local CFT for tori}.
Thus, from the diagram we see $\rho_{N/M} \circ \iota_K(H_1(\partial V_K))= \{1\} \Longleftrightarrow 
\mathrm{Gal}(\partial V_{K'}/\partial V_K)=\{1\} \Longleftrightarrow  K$ is completely decomposed in $N$.

(2) For the proof of the if part, $\mathbb{Z}[m]$ is 0 in $\mathrm{Gal}(\partial V_{K'}/\partial V_K)\cong H_1(\partial V_{K})/h_*(H_1(\partial V_{K'}))$ by the above diagram. 
It implies $h_*(H_1(\partial V_{K'})$ is containing $\mathbb{Z}[m]$. Therefore $h:V_{K'} \to V_K$ is unbranched subcovering by Corollary \ref{cor of local CFT for tori}. 
Thus, $K$ is unbranched in $N$.

For the only if part, by Corollary \ref{cor of local CFT for tori}, there exist a subgroup $H \subset H_1(\partial V_K)$ such that $\mathrm{Gal}(\partial V_{K'}/\partial V_K) \cong H_1(\partial V_{K})/H$ and $H$ containing $\mathbb{Z}[m]$. 
By the above diagram, $\rho_{N/M} \circ \iota_K(H)= \{1\}$. It implies $\rho_{N/M} \circ \iota_K(\mathbb{Z}[m])= \{1\}$.
\end{proof}

%%%%%%%%%%%%%%%%%%%%%%%%%%%%%%%%%%%%%%%%%%%%%%%%%%%%%%%%%%%%%%%%%%%%%%%%%%%%%%%%%%%%%%%%%%

H. Niibo \\
Faculty of Mathematics, Kyushu University \\
744, Motooka, Nishi-ku, Fukuoka, 819-0395, Japan \\
E-mail: h.niibo.411@s.kyushu-u.ac.jp

\end{document}